\theoremstyle{plain}
\newtheorem{thm}{Theorem}
\newtheorem{lemma}{Lemma}
\newtheorem{prop}{Proposition}
\theoremstyle{definition}
\newtheorem{rem}{Remark}
\numberwithin{equation}{section}
\numberwithin{thm}{section}
\numberwithin{prop}{section}
\numberwithin{lemma}{section}
\numberwithin{cor}{section}
\numberwithin{defn}{section}
\numberwithin{rem}{section}
\numberwithin{ex}{section}
\newcommand{\pd}{\partial}
\newcommand{\mbb}{\mathbb}
\newcommand{\ep}{\varepsilon}
\newcommand{\vp}{\varphi}
\newcommand{\re}{\mbb R}
\newcommand{\al}{\alpha}
\newcommand{\Om}{\Omega}
\newcommand{\eqal}[1]{\begin{equation}\begin{aligned}#1\end{aligned}\end{equation}}
\newcommand{\ov}{\overline}
\newcommand{\msk}{\medskip}
\begin{document}

\title{Hessian estimates for special Lagrangian equation by doubling}

\author{Ravi Shankar}

\date{\today}

\maketitle

\begin{abstract}
    New, doubling proofs are given for the interior Hessian estimates of the special Lagrangian equation.  These estimates were originally shown by Chen-Warren-Yuan in CPAM 2009 and Wang-Yuan in AJM 2014.  This yields a higher codimension analogue of Korevaar's 1987 pointwise proof of the gradient estimate for minimal hypersurfaces, without using the Michael-Simon mean value inequality.
\end{abstract}

\section{Introduction}

\textbf{The pointwise estimate for minimal surfaces}

In 1987, Korevaar \cite{K87} gave a new, pointwise proof of the gradient estimate for solutions of the minimal hypersurface PDE.  The proof was modelled after Cheng-Yau's cutoff \cite{CY76} in the maximal surface context.  Korevaar's pointwise proof was robust enough to give gradient estimates for fully nonlinear relatives, the sigma-k curvature equations. 

\smallskip
The original proof by Bombieri-De Giorgi-Miranda \cite{BDM69}, and simplified by Trudinger \cite{T72}, uses two tools from minimal surface theory: the Michael-Simon mean value inequality for graphs with bounded mean curvature, and the Jacobi inequality $\Delta b\ge |\nabla b|^2$, a strong subharmonicity originating from Jacobi fields in the vertical direction.  The Korevaar proof relies only on the Jacobi inequality.  The two dimensional surface proof of Gregori \cite{G94} uses isothermal coordinates.

\medskip
Although the Jacobi inequality can sometimes be found in other categories using ordinary differential calculus, the mean value inequality, and its cousin the monotonicity formula, is a delicate integral relation which is difficult to establish outside the minimal surface context.  

\medskip
\noindent
\textbf{Higher codimensions?}

Despite its versatility, an analogous Korevaar argument is missing for higher codimension minimal surfaces.  Wang \cite{W04} established a gradient estimate under the area decreasing condition \cite{W04} using an integral method.  More recently, Dimler \cite{D23} found a pointwise proof under the area decreasing condition, using Savin's theory of viscosity solutions.  However, the method requires an additional condition, that all but one component of the graph to be small.

\medskip
\noindent
\textbf{Main result of this paper}

In this paper, we find a Korevaar type proof of the gradient estimate for a class of high codimension minimal surfaces.  These surfaces can be described by a single potential function $u$, such that $(x,Du)$ is a minimal surface.  The potential solves a second order, fully nonlinear, elliptic PDE \eqref{slag} called the special Lagrangian equation, as shown by Harvey-Lawson \cite{HL82}.  In this context, the gradient estimate for $(x,Du)$ is a Hessian estimate for $u$.

\medskip
Despite its relative simplicity compared to general high codimension surfaces, a Korevaar proof of the Hessian estimate for the special Lagrangian equation was elusive.  Integral proofs under much weaker, and sharp by \cite{NV10}-\cite{WY13}-\cite{MS} singular solutions, conditions were only established by Chen-Warren-Yuan \cite{CWY09} in 2009, and Wang-Yuan in 2014 \cite{WY14}.  A pointwise proof was attempted in \cite{WY08} but required a flatness condition on the gradient.  

\medskip
The main technical ingredient of our proof is a Korevaar type pointwise calculation.  The other ingredients are also pure PDE techniques, described below.  In particular, nowhere is the Michael-Simon mean value inequality used.

\medskip
\noindent
\textbf{A doubling approach.}

Our approach to the Hessian estimate is based on Shankar-Yuan's resolution of the Hessian estimate for the sigma-2 equation in dimension four \cite{SY23a}.  The first step is to derive partial regularity by combining an Alexandrov ($D^2u$-existing-a.e.) theorem with Savin's $\ep$-regularity \cite{S07}: the singular set is closed and Lebesgue measure zero.  The next step is to propagate this partial regularity to the entire domain using a doubling inequality for the Hessian.  Partial regularity implies local boundedness of the Hessian inside the smooth set, so the doubling gives a global $C^{1,1}$ estimate and rules out the singular set.

\medskip
The doubling inequality generally requires a Jacobi field type inequality $\Delta b\ge |\nabla b|^2$.  Trudinger \cite{T80} showed doubling in the uniformly elliptic Harnack inequality context.  Using the Guan-Qiu test function \cite{GQ19}, Qiu \cite{Q17} established doubling for the sigma-2 equation in dimension three, for which Jacobi is available.  In fact, the sigma-2 equation is a special Lagrangian equation, in dimension three only.  Shankar-Yuan showed doubling for the sigma-2 equation in dimension four using an almost-Jacobi inequality with a degenerate coefficient \cite{SY23a}.  Shankar-Yuan found a geometric doubling inequality for the Monge-Amp\`ere equation \cite{SY23b}.

\medskip
In the present paper, we use the Jacobi inequalities of Chen-Warren-Yuan \cite{CWY09} and Wang-Yuan \cite{WY14} to discover doubling inequalities for the special Lagrangian equation in convex and critical/supercritical phase categories.

\medskip
Another partial regularity propagation has been used for the minimal hypersurface equation.  Caffarelli-Wang \cite{CW93} has another proof of the $C^{1,\alpha}$ regularity of Lipschitz solutions.  Starting with $C^{1,\alpha}$ partial regularity (pg 155), they use a geometric Harnack inequality to propagate this flatness to the entire domain (pg 156).

\medskip
\noindent
\textbf{New ideas to establish the doubling inequality}

We modify the Korevaar type calculation to our high codimension setting.  This fails to give a Hessian estimate, but it yields a doubling inequality.  Two modifications are needed to Korevaar to achieve this.  We first mix Guan-Qiu's test function involving the radial derivative $x\cdot Du-u$ with the Korevaar cutoff to create a minimal surface version of Guan-Qiu.  Secondly, for critical phases, the equation's ellipticity and concavity degenerate, and we need to add an additional increasing, concave term to the cutoff to compensate.  Unfortunately, only Green's type functions have strong enough concavity, and the cutoff becomes singular.  Nevertheless, we only need to establish a doubling inequality, rather than a Hessian estimate.  We are free to exclude a small sphere from our calculations.  We can then place the singularity inside this inner sphere without analytic problems.

\medskip
The Qiu \cite{Q17} cutoff used for sigma-2 in three dimensions (i.e. critical phase sLag in 3D) does not seem to extend to the special Lagrangian equation in the convex or higher dimensional critical phase settings.  Our singular cutoff of Korevaar/Guan-Qiu type seems important to obtain the doubling inequality.

\medskip
We note that Wang-Yuan \cite{WY14} established n-1 convexity of solutions.  This slightly weaker version of convexity and the black box in Chaudhuri-Trudinger \cite{CT05} allow us to establish Alexandrov regularity without any trouble.  In other situations, Alexandrov regularity can be challenging.


\section{Statement of results}

\smallskip
This paper gives pointwise proofs of the Hessian estimates for the special Lagrangian equation:
\eqal{
\label{slag}
\sum_{i=1}^n\arctan\lambda_i(D^2u)=\Theta=\text{ constant}\in \Big(-n\frac{\pi}{2},n\frac{\pi}{2}\Big)
}
Here, $\lambda_i$'s are the eigenvalues of the Hessian $D^2u$ of solution $u(x)$.  The symmetric polynomial $\sigma_k$ version of this equation is
$$
\cos\Theta\,(\sigma_1-\sigma_3+\sigma_5-\cdots)-\sin\Theta\,(1-\sigma_2+\sigma_4-\cdots)=0.
$$
Harvey and Lawson \cite{HL82} showed that Lagrangian graph $(x,Du(x))\in(\re^n\times\re^n,dx^2+dy^2)$ is a volume minimizing submanifold.  The phase is called critical or supercritical if $\Theta\ge (n-2)\pi/2$ \cite{Y06}.  In this case, Yuan showed that the PDE has convex level set.

\medskip
The result of this paper is a new proof of the following two Hessian estimates.  The first was shown by Chen-Warren-Yuan \cite{CWY09} in 2009.  Their estimate was explicit, while our proof is by compactness.
\begin{thm}[Convex solutions]
Let $u$ be a smooth convex solution of \eqref{slag} in $B_2(0)$.   Then 
$$
|D^2u(0)|\le C\Big(n,\|u\|_{C^{0,1}(B_1(0))},\Theta\Big).
$$
\end{thm}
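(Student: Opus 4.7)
The plan is to combine two ingredients highlighted in the introduction: partial regularity via Alexandrov plus Savin's $\ep$-regularity, and a new Hessian doubling inequality obtained from a Korevaar-type pointwise calculation with a Guan-Qiu modification. Since the theorem is qualitative (constant not explicit), I would argue by compactness: assume for contradiction a sequence $u_k$ of smooth convex solutions of \eqref{slag} on $B_2$ with $\|u_k\|_{C^{0,1}(B_1)}\le K$ and $\Theta_k\to\Theta_\infty$, but $|D^2u_k(0)|\to\infty$, and derive a contradiction from the combined regularity and doubling properties.

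\msk

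For the partial regularity input, convexity of $u$ immediately gives a Hessian $D^2u$ in the Alexandrov sense almost everywhere, and at every such Alexandrov point $u$ matches its second-order Taylor polynomial up to an $o(|x-x_0|^2)$ error. Savin's $\ep$-regularity \cite{S07} then upgrades each Alexandrov point to a point of smoothness. The singular set $\Sigma\subset B_2$ is therefore closed and Lebesgue null, and $|D^2u|$ is locally bounded on $B_2\sm\Sigma$.

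\msk

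For the doubling step, let $b=\log\lambda_{\max}(D^2u)$. The Chen-Warren-Yuan Jacobi inequality \cite{CWY09} gives $\Delta_g b\ge|\nabla_g b|^2$ on the minimal Lagrangian graph $(x,Du(x))$, where $\Delta_g$ and $\nabla_g$ are with respect to the induced metric $g_{ij}=\delta_{ij}+u_{ik}u_{jk}$. Following the strategy outlined in the introduction, I would apply the maximum principle to an auxiliary function of the form $\eta\cdot b$, where $\eta$ is a cutoff blending a standard spatial radial factor (localizing to a ball $B_r(x_0)\Subset B_2\sm\Sigma$) with a Guan-Qiu factor built from the radial derivative $x\cdot Du-u$, which is nonnegative and convex for convex $u$. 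At an interior maximum $x^*$ the first-order condition relates $\nabla_g\eta$ to $\nabla_g b$, and the inequality $\Delta_g(\eta b)\le 0$ together with Jacobi's strong subharmonicity should produce a doubling inequality of the form
$$
\max_{B_r(x_0)}b\le\theta\max_{B_{2r}(x_0)}b+C,\qquad \theta<1,
$$
with $C=C(n,K,\Theta)$.

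\msk

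To close the contradiction, for each $u_k$ the partial regularity and Alexandrov bound supply a point $y_k\in B_{1/2}$ arbitrarily close to $0$ where $|D^2u_k(y_k)|$ is bounded by a constant depending only on $K$; iterating the doubling inequality on dyadic balls centered at $y_k$ propagates this bound to $0$, contradicting $|D^2u_k(0)|\to\infty$. The main obstacle I anticipate is the design of $\eta$. The Korevaar radial cutoff by itself generates first-order error terms at $x^*$ that must be absorbed by the Guan-Qiu term, while the quadratic terms arising from the second derivatives of $\eta$ must be dominated by $(1-\theta)|\nabla_g b|^2$ with slack independent of $\max b$; otherwise the iteration does not close. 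Convexity simplifies the algebra, since $\lambda_i\ge 0$ makes $|D^2u|\sim\tr D^2u$, but one still has to track constants carefully through the translation between the intrinsic Jacobi inequality and the extrinsic cutoff, and choose the cutoff profile so that the maximum-principle relation at $x^*$ separates cleanly into a good Jacobi term and a bounded error.
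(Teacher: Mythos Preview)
Your overall architecture matches the paper's: compactness, Alexandrov partial regularity of the limit, Savin $\ep$-regularity, then a Korevaar/Guan--Qiu doubling inequality to propagate. But two of your steps, as written, would not close.

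\msk
\textbf{The doubling inequality is stated in the wrong direction.} You write
$\max_{B_r(x_0)}b\le\theta\max_{B_{2r}(x_0)}b+C$ with $\theta<1$. This bounds the smaller ball by the larger, which is vacuous and cannot be iterated outward to reach $x=0$ from a good point $y$. What the argument needs, and what the paper proves in Proposition~4.1, is the reverse: for a fixed outer radius $R=R(n,\|u\|_{C^{0,1}})$ and any inner $r\le R$,
\[
\sup_{B_R(y)}a\le C(r,n,\|u\|_{C^{0,1}})\sup_{B_r(y)}a,
\]
so that a bound on a tiny ball $B_r(y)$ propagates to the fixed ball $B_R(y)\ni 0$ in one shot, no dyadic iteration needed. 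The mechanism is that at an interior maximum of $\eta a$ the Jacobi inequality forces $|\nabla_g\vp|^2\le h\Delta_g\vp$, and for convex $u$ this yields $|x-y|^2\le Cht$, which for $h$ small enough pushes the maximum onto $\pd B_r(y)$. Also, the Chen--Warren--Yuan Jacobi inequality in the convex case is for $b=\ln V=\ln\sqrt{\det(I+(D^2u)^2)}$, i.e.\ $a=V^{1/n}$, not for $\log\lambda_{\max}$; the $\lambda_{\max}$-type quantity $b_m$ belongs to the critical-phase setting of Wang--Yuan.

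\msk
\textbf{The stability step is missing.} You say ``for each $u_k$ the partial regularity and Alexandrov bound supply a point $y_k\in B_{1/2}$ \dots\ where $|D^2u_k(y_k)|$ is bounded by a constant depending only on $K$.'' Alexandrov regularity is a property of the \emph{limit} $u$, and by itself gives no uniform control on the smooth $u_k$. The paper inserts an essential step: pick one Alexandrov point $y$ of $u$ with $|y|<R/2$, let $Q$ be its second Taylor polynomial, set $\bar v_k(\bar x)=r^{-2}(u_k-Q)(y+r\bar x)$, and observe that $\bar v_k$ solves $G(D^2\bar v_k)=0$ with $G(0)=0$. Uniform convergence plus the $o(r^2)$ Alexandrov error make $\|\bar v_k\|_{L^\infty(B_1)}$ small for $r$ small and $k$ large, so Savin's theorem gives a \emph{uniform} bound $\|u_k\|_{C^{2,\al}(B_r(y))}\le C(n,Q,\sigma)$. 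Only now can the doubling inequality (applied to the smooth $u_k$) transport this to $B_R(y)\ni 0$ and contradict $|D^2u_k(0)|\to\infty$.
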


Stronger forms of the next estimate were shown by Warren-Yuan \cite{WY09a}, \cite{WY10} and Wang-Yuan \cite{WY14} for $n\ge 3$ and Warren-Yuan \cite{WY09b} in dimension two.  We also restrict to $n\ge 3$.  The dimension two case is either harmonic, or covered by the simple compactness method in \cite{L19}.  These two dimensional cases were first consequences of results by Heinz \cite{H59} and Gregori \cite{G94} using isothermal coordinates.
\begin{thm}[Critical phase]
Let $u$ be a smooth solution of \eqref{slag} on $B_2(0)$ for phase $\Theta$ critical $\Theta=(n-2)\pi/2$ or supercritical $\Theta\in ((n-2)\pi/2,n\pi/2)$ for $n\ge 3$.  Then 
$$
|D^2u(0)|\le C\Big(n,\|u\|_{C^{0,1}(B_1(0))},\Theta\Big).
$$
\end{thm}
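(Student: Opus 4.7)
\medskip
\noindent
\textbf{Proof proposal.}

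The plan is to argue by compactness and contradiction, using the two ingredients highlighted in the introduction: partial regularity of the limiting viscosity solution, and a doubling inequality for smooth solutions. Suppose the estimate fails; then there is a sequence of smooth critical/supercritical phase solutions $u_k$ on $B_2$ with $\|u_k\|_{C^{0,1}(B_1)}\le 1$ but $|D^2 u_k(0)|\to\infty$. After passing to a subsequence, $u_k\to u_\infty$ locally uniformly, where $u_\infty$ is a Lipschitz viscosity solution of \eqref{slag} with the same phase $\Theta$. The goal is to combine smoothness of $u_\infty$ off a small singular set with a uniform doubling inequality applied to $u_k$ to contradict the blow-up.

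\medskip
\noindent
\textbf{Partial regularity.} Wang-Yuan \cite{WY14} proved $(n-1)$-convexity of smooth critical/supercritical solutions, which passes to the limit in a weak sense and, combined with the Chaudhuri-Trudinger framework \cite{CT05}, gives that $u_\infty$ is twice Alexandrov differentiable almost everywhere on $B_{3/2}$. At any Alexandrov point the graph $(x,Du_\infty(x))$ is infinitesimally flat, so Savin's $\varepsilon$-regularity theorem \cite{S07} upgrades the regularity, showing $u_\infty$ is smooth in a neighborhood. Hence $u_\infty\in C^\infty(B_{3/2}\setminus\Sigma)$ for a closed set $\Sigma$ with $|\Sigma|=0$, and $u_k\to u_\infty$ smoothly on compact subsets of $B_{3/2}\setminus\Sigma$.

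\medskip
\noindent
\textbf{Doubling inequality.} The core technical step is to establish, for every smooth critical/supercritical solution $v$ on $B_R$, a doubling-type estimate of the form
\[
\sup_{B_r(0)}|D^2 v|\;\le\;C\bigl(1+\sup_{B_R\setminus B_r}|D^2 v|\bigr),\qquad 0<r<R/2,
\]
with $C=C(n,\Theta,R,\|v\|_{C^{0,1}(B_R)})$. The plan is to apply the maximum principle to a test function of Korevaar/Guan-Qiu type,
\[
\Phi = b \;+\; A\,\eta(x)\bigl(x\cdot Dv - v\bigr) \;+\; B\,G(|x|),
\]
where $b$ is a Jacobi-subharmonic quantity controlling the Hessian (from Wang-Yuan \cite{WY14}) satisfying $\mc{L} b\ge c|\nabla b|^2$ under the linearized operator $\mc{L}$ of \eqref{slag}; $\eta$ is a Korevaar cutoff supported in $B_R$; the Guan-Qiu radial piece $x\cdot Dv-v$ \cite{GQ19} contributes favorably signed Hessian terms; and $G=|x|^{2-n}$ is a Green's-type singular term centered at $0$, whose strong concavity compensates the degenerate ellipticity/concavity of \eqref{slag} at critical phase. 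Since only a doubling estimate is sought, not pointwise control at the origin, the singularity of $G$ is harmless: the calculation is performed on the annulus $B_R\setminus B_\rho$, interior maxima of $\Phi$ are forced into the annulus, and substituting into the Jacobi inequality yields the displayed bound after a limit.

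\medskip
\noindent
\textbf{Closing the contradiction; main obstacle.} Applied to each smooth $u_k$ at balls around regular points of $u_\infty$, the doubling inequality together with the smooth convergence on $B_{3/2}\setminus\Sigma$ produces uniform Hessian bounds on a dense open subset of $B_1$. A second application of doubling extends these bounds across the measure-zero set $\Sigma$ to all of $B_1$, in particular giving a uniform bound on $|D^2 u_k(0)|$ and the desired contradiction. The main obstacle will be the pointwise Korevaar/Guan-Qiu computation in the critical phase: the Wang-Yuan Jacobi inequality degenerates precisely where some eigenvalues $\lambda_i$ make $\arctan\lambda_i$ saturate at $\pi/2$, and matching this deficit with the strong concavity of the singular term $G$ without the error overwhelming the contribution of the cutoff $\eta$ is the delicate calculation where the novelty of the argument resides.
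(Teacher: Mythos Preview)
Your overall architecture—compactness/contradiction, $(n-1)$-convexity passed to the limit, Chaudhuri--Trudinger Alexandrov regularity, Savin $\varepsilon$-regularity for partial regularity, then a doubling inequality to propagate—matches the paper exactly. The differences are in how the doubling is proved and how it is used to close, and the first of these is a genuine concern.

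Your test function $\Phi = b + A\,\eta(x)(x\cdot Dv - v) + B\,G(|x|)$ is \emph{additive}, whereas the paper's is \emph{multiplicative}: it maximizes $\eta\cdot a$ with $a=e^{Mb_m}$ and $\eta=(e^{(S-\varphi)/h}-1)_+$ a Korevaar exponential whose phase $\varphi$ already contains both the Guan--Qiu radial term and a singular term $-\alpha^{-1}2^{\alpha}|x-y|^{-2\alpha}$. That multiplicative structure is what makes the Jacobi inequality $\Delta_g a\ge 2|\nabla_g a|^2/a$ collapse the interior-maximum condition to the clean scalar inequality $|\nabla_g\varphi|^2\le h\,\Delta_g\varphi$, after which the eigenvalue pinching of \cite{WY14} and a case analysis force the maximum to the inner sphere. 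The paper explicitly remarks that the additive Qiu cutoff for $\sigma_2$ in dimension three ``does not seem to extend'' to critical phase in higher dimensions; your additive $\Phi$ is in that family and will hit the same wall, since $c|\nabla b|^2$ does not absorb the cross terms from $\eta$ and $G$ in the way the exponential does. Relatedly, your singular piece $G=|x|^{2-n}$ is too weak: the paper needs $|x-y|^{-2\alpha}$ with $\alpha$ large depending on $n$ and $\|u\|_{C^{0,1}}$, because the ``hard subcase'' (where $\lambda_n$ nearly cancels the singular contribution) is ruled out precisely by taking $\alpha+1>C(n)$; the Green exponent does not supply enough concavity.

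Your closing step is also less direct than necessary. The paper's doubling runs the other way, $\sup_{B_{1/4}(y)}a_1\le C\sup_{B_r(y)}a_1$ for \emph{any} center $y\in B_{1/2}$, and the contradiction is closed in one move: choose a single Alexandrov point $y$ of $u_\infty$ with $|y|$ small, apply Savin to the rescaled $u_k-Q$ to bound $|D^2u_k|$ on $B_r(y)$, then one doubling carries the bound to $B_{1/4}(y)\ni 0$. Your version requires finding an annulus $B_R(0)\setminus B_r(0)$ disjoint from the closed measure-zero set $\Sigma$, which is not automatic, and the phrase ``a second application of doubling extends these bounds across $\Sigma$'' hides this issue rather than resolving it.
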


A byproduct is a removal of the flatness condition in the pointwise proof of \cite{WY08}, and a generalization of their condition required for the estimate.  Given a smooth solution $u$ of \eqref{slag}, we say that positive, proper, smooth function $a(D^2u)$ of the Hessian has a \textit{Jacobi inequality} if $\Delta_ga\ge 2|\nabla_g a|^2/a$.   We also recall that a semi-convex function has a Hessian lower bound $D^2u\ge -KI$ for some $K>0$, and that a proper function $a$ satisfies $a^{-1}(B)$ is bounded for any bounded set $B$.
\begin{thm}[Semiconvex + Jacobi]
\label{thm:scvx}
Let $u$ be a smooth solution of \eqref{slag} on $B_2(0)$ which is semi-convex and has a Jacobi inequality.  Then
$$
|D^2u(0)|\le C\Big(n,\|u\|_{C^{0,1}(B_1(0))},a,K,\Theta\Big).
$$
\end{thm}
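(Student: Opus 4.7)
\medskip
\noindent\textbf{Proof plan.}  I follow the doubling scheme described in the introduction: combine a partial regularity statement with a pointwise doubling inequality for $a(D^2u)$.  The semi-convexity $D^2u\ge -KI$ places $u$ in the Alexandrov framework, so pointwise Hessians exist almost everywhere.  Combined with Savin's $\varepsilon$-regularity, this yields a closed singular set $\Sigma\subset B_2$ of Lebesgue measure zero off which $u$ is smooth and $a(D^2u)$ is locally bounded.  Once a quantitative doubling inequality for $a(D^2u)$ is in hand, iteration propagates this local bound across $\Sigma$, and the properness of $a$ turns the bound into the desired Hessian estimate.

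\medskip
\noindent On the smooth set set $b=\log a(D^2u)$.  The Jacobi inequality $\Delta_g a\ge 2|\nabla_g a|^2/a$ translates immediately to $\Delta_g b\ge |\nabla_g b|^2$, where $g=I+(D^2u)^2$ is the induced Lagrangian graph metric.  On an annulus $B_R\sm B_r\subset B_2\sm\Sigma$, I apply a Korevaar-type maximum principle to the hybrid test function
\[
\Phi(x)=\eta(x)\,b(x)+\alpha\bigl(x\cdot Du-u\bigr)+\beta\, G(|x|),
\]
in which $\eta\ge 0$ is a standard Korevaar cutoff supported in $B_R$, the Guan--Qiu radial term $x\cdot Du-u$ is uniformly bounded in terms of $\|u\|_{C^{0,1}(B_1)}$ and $K$ (by semi-convexity), and $G$ is a radial strongly concave correction of Green type with pole placed inside $B_r$.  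Because we only seek a doubling inequality rather than a pointwise bound at $0$, the pole of $G$ never enters the calculation.  At an interior maximum of $\Phi$, first derivatives vanish and $\Delta_g \Phi\le 0$; substituting the Jacobi inequality, the Guan--Qiu contribution (which supplies the lower order ellipticity missing in the degenerate branch), and the concavity of $G$ (which supplies the extra concavity needed in convex or critical phase regimes), one trades the productive term $|\nabla_g b|^2$ against derivatives of $\eta$.  The expected output is a doubling estimate of the form $\sup_{B_{R/2}} b\le C\sup_{\partial B_R\cup\partial B_r} b+C$ with $C=C(n,\|u\|_{C^{0,1}(B_1)},a,K,\Theta)$.

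\medskip
\noindent The main obstacle is the Korevaar calculation itself: arranging the three pieces of $\Phi$ so that $|\nabla_g b|^2$ absorbs the cutoff errors even when both the metric $g$ and the concavity of the equation degenerate.  The Guan--Qiu radial piece is what adapts Korevaar from the hypersurface setting to the Lagrangian graph, and the singular Green-type correction supplies precisely the amount of concavity previously missing in convex and critical phase situations.  Once the doubling inequality is established, I iterate with $\partial B_r$ placed inside the smooth set, use the partial regularity bound on $b$ there to close the recursion, and propagate the resulting bound across $\Sigma$; properness of $a$ then converts the $a$-bound into the desired $|D^2u(0)|\le C$.
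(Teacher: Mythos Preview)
Your plan has the right ingredients (Jacobi, doubling, partial regularity, Savin) but the logical structure is off in a way that leaves a real gap.

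The solution $u$ in the statement is smooth by hypothesis, so there is no singular set $\Sigma$ for $u$, and the partial regularity discussion as you have written it is vacuous.  In the paper the partial regularity is applied not to the given solution but to the \emph{limit} of a blowup sequence.  One argues by contradiction: take smooth $u_k$ with $\|u_k\|_{C^{0,1}(B_1)}\le A$ and $|D^2u_k(0)|\to\infty$, pass to a uniform limit $u_\infty$, and note that semi-convexity survives the limit, so Alexandrov produces a point $y$ with $|y|<R/2$ at which $u_\infty$ has a second-order Taylor polynomial $Q$.  Savin's $\varepsilon$-regularity, applied to the rescaled error $r^{-2}\bigl(u_k(y+r\,\cdot)-Q(y+r\,\cdot)\bigr)$, then yields a uniform $C^{2,\alpha}$ bound for $u_k$ on some fixed $B_r(y)$ for all large $k$.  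Only now does the doubling inequality enter, applied to the smooth $u_k$, to transport this bound from $B_r(y)$ to $B_R(y)\ni 0$ and contradict blowup.  Without this compactness step you have no seed bound to feed into the doubling; a doubling inequality is a comparison, not an estimate, and ``iterating with $\partial B_r$ placed inside the smooth set'' produces nothing quantitative when the smooth set is all of $B_2$.

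Your test function is also different from, and more elaborate than, what the semi-convex case actually uses.  The paper maximizes the \emph{product} $\eta\,a$, with the Guan--Qiu radial term built into the exponent of the Korevaar cutoff:
\[
\eta=\bigl(e^{(1-\varphi)/h}-1\bigr)_+,\qquad \varphi=(x-y)\cdot Du-u+u(y)+\tfrac{t}{2}|x-y|^2,
\]
with $t\ge 2K$ large depending on $K$ and $\|u\|_{C^{0,1}}$.  At an interior maximum, Jacobi reduces everything to the scalar inequality $|\nabla_g\varphi|^2\le h\,\Delta_g\varphi$; since $\varphi_i=(x_i-y_i)(\lambda_i+t)$ at a diagonal point, semi-convexity $\lambda_i\ge -K$ together with $t\ge 2K$ forces $|\nabla_g\varphi|^2\ge c(n,K)|x-y|^2$, and one takes $h$ small to push the maximum to $\partial B_r(y)$.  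No singular Green-type correction $G$ is needed here; that device is reserved in the paper for the critical-phase case, where $\lambda_n$ can be large and negative and the quadratic term $t|x-y|^2/2$ is inadequate.  Finally, your stated output $\sup_{B_{R/2}} b\le C\sup_{\partial B_R\cup\partial B_r} b+C$ carries a stray $\partial B_R$ term: since $\eta$ vanishes near $\partial B_{1/2}(y)$, the maximum of $\eta a$ over the annulus lands on $\partial B_r(y)$ alone, which is precisely what makes the conclusion a genuine doubling $\sup_{B_R(y)} a\le C(r,n,K,\|u\|_{C^{0,1}})\sup_{B_r(y)} a$.
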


\begin{rem}
    One consequence is a new proof of the following.  The Hessian estimate was earlier shown in a pointwise proof of \cite[Lemma 2.2]{WY08} assuming the Hessian eigenvalue condition
    \eqal{
    \label{cD19}
    3+(1-\ep)\lambda_i^2+2\lambda_i\lambda_j\ge 0,\qquad 1\le i,j\le n,
    }
    for some $\ep>0$, under an additional flatness condition $|Du(x)|\le \delta(n)|x|$.  Later, this estimate was shown in \cite[Theorem 5.1]{D19} without flatness, for a slightly negative $\ep$ above, using the Michael-Simon mean value inequality.  Theorem \ref{thm:scvx} shows how to remove the Warren-Yuan flatness condition on $Du$ in the pointwise proof. Indeed, equation (4.52) and Lemma 4.1 in \cite{D19} show that $u$ is semi-convex under condition \eqref{cD19}.  The Jacobi inequality in \cite[Lemma 2.1]{WY08} then verifies that the assumptions for Theorem \ref{thm:scvx} are verified.  It is likely that doubling proofs for the sub-critical estimates in \cite{Z22,Z23} are also possible.
\end{rem}

\begin{rem}
    In view of Mooney-Savin's \cite{MS} recent semi-convex singular solution of \eqref{slag}, it is reasonable to expect that the Jacobi inequality required for Theorem \ref{thm:scvx} fails for such solutions.  In fact, after a Legendre-Lewy transform, their solution satisfies $\det D^2\bar u=0$ on a subdomain.  This equation lacks ellipticity and concavity, which seems necessary to establish Jacobi inequalities.
\end{rem}

\section{Preliminaries}

\textbf{1. Notation:} for function $u(x)$, we denote $u_i=\pd u/\pd x^i$ and $u_{ij}=\pd^2u/\pd x^i\pd x^j$.  On the other hand, eigenvalues $\lambda_i$ of the Hessian and subharmonic quantities $b_m=m^{-1}(\ln\sqrt{1+\lambda_1^2}+\cdots+\ln\sqrt{1+\lambda_m^2})$ do not denote partial derivatives.  Moreover, $C(n)$ denotes various dimensional constants.

\noindent
\textbf{2. Differential Operators:} For $g=dx^2+dy^2|_{y=Du}$, or $g=I+D^2uD^2u$, the Laplace-Beltrami operator is
\eqal{
\Delta_g=\frac{1}{\sqrt{\det g}}\pd_i\Big(\sqrt{\det g}\,g^{ij}\pd_j\Big).
}
In fact, the mean curvature is $H=\Delta_g(x,Du(x))$.  By Harvey-Lawson \cite{HL82}, it follows that $H=0$ on solutions of \eqref{slag}.  Since this implies $x^i$ are harmonic coordinates where $\Delta_gx=0$, the Laplace operator simplifies to the linearized operator of \eqref{slag}:
\eqal{
\Delta_g=g^{ij}\pd_{ij}\stackrel{p}{=}\frac{1}{1+\lambda_i^2}\pd_{ii}
}
at any diagonal point $\lambda_i=u_{ii}$ of the Hessian, such as after composition with a rotation.  Here, we assume summation over repeated indices, unless the index ranges are stated.  The gradient and inner product with respect to the metric are
\eqal{
&\nabla_gv=\Big(g^{1i}v_i,\dots,g^{n1}v_i\Big),\\
&\langle \nabla_gv,\nabla_gw\rangle_g=g^{ij}v_iw_j\stackrel{p}{=}g^{ii}v_iw_i,\\
&|\nabla_gv|^2=\langle \nabla_gv,\nabla_gv\rangle_g\stackrel{p}{=}g^{ii}v_i^2.
}
where $v_i=\pd_iv$ for function $v$.  

\smallskip
\noindent
\textbf{3. Jacobi inequality: convex.}  We recall the Jacobi inequality for smooth convex solutions of \eqref{slag}.  Given volume form $dV_g=\sqrt{\det g}dx$, we define
\eqal{
V=\sqrt{\det g}=\sqrt{\det(I+(D^2u)^2)}=\Pi_{i=1}^n\sqrt{1+\lambda_i^2}.
}
Then the Jacobi inequality is established by directly taking derivatives and using algebra.
\begin{prop}[\cite{CWY09} Proposition 2.1]
Let $u$ be a smooth \textbf{convex} solution of \eqref{slag} on $B_R(0)\subset\re^n$.  Then
\eqal{
\Delta_g\ln V\ge \frac{1}{n}|\nabla_g \ln V|^2,
}
or equivalently, for $a=V^{1/n}$,
\eqal{
\label{JacCon}
\Delta_ga\ge 2\frac{|\nabla_ga|^2}{a}.
}
\end{prop}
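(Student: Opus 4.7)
The plan is to derive the Jacobi inequality by a direct pointwise calculation at an arbitrary point $x_0\in B_R(0)$. After rotating coordinates I may assume $D^2u(x_0)$ is diagonal with eigenvalues $\lambda_1,\ldots,\lambda_n$, and by a standard perturbation I may take the eigenvalues to be distinct so that they are smooth functions of $x$ near $x_0$. At $x_0$, the metric $g=I+(D^2u)^2$ is diagonal with $g^{ii}=(1+\lambda_i^2)^{-1}$, and $\Delta_g=\sum_k g^{kk}\pd_{kk}$ since the $x^i$ are harmonic coordinates by the minimality noted in the preliminaries.

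Next I would compute $\pd_k\ln V$ and $\pd_{kk}\ln V$ from $\ln V=\tfrac{1}{2}\sum_i\ln(1+\lambda_i^2)$ using the eigenvalue perturbation formulas $\pd_k\lambda_i=u_{iik}$ and $\pd_{kk}\lambda_i=u_{iikk}+2\sum_{j\ne i}u_{ijk}^2/(\lambda_i-\lambda_j)$. The first gives $\pd_k\ln V=\sum_i\lambda_i u_{iik}/(1+\lambda_i^2)$; the second produces a fourth-derivative term $u_{iikk}$, which I remove by differentiating the equation $\sum_i\arctan\lambda_i=\Theta$ twice in direction $k$. Using $u_{iikk}=u_{kkii}$ to reshuffle indices and weighting by $\lambda_k/(1+\lambda_k^2)$ before summing yields an expression
\[
\Delta_g\ln V=\sum_{i,j,k}c_{ijk}(\lambda)\,u_{ijk}^2
\]
with explicit coefficients. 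The apparent $1/(\lambda_i-\lambda_j)$ singularities cancel after symmetrization in the $(i,j)$ swap, via the identity $\lambda_i(1+\lambda_j^2)-\lambda_j(1+\lambda_i^2)=(\lambda_i-\lambda_j)(1-\lambda_i\lambda_j)$, so the $c_{ijk}$ remain smooth at repeated eigenvalues.

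On the other hand, Cauchy-Schwarz bounds
\[
|\nabla_g\ln V|^2=\sum_k\frac{1}{1+\lambda_k^2}\Big(\sum_i\frac{\lambda_i u_{iik}}{1+\lambda_i^2}\Big)^2\le n\sum_{i,k}\frac{\lambda_i^2\,u_{iik}^2}{(1+\lambda_i^2)^2(1+\lambda_k^2)},
\]
so the desired inequality reduces to a term-by-term coefficient comparison. The coefficient of $u_{iii}^2$ in $\Delta_g\ln V$ comes out to $1/(1+\lambda_i^2)^2$, which dominates the Cauchy-Schwarz weight $\lambda_i^2/(1+\lambda_i^2)^3$ unconditionally; the coefficient of $u_{iik}^2$ with $i\ne k$ simplifies to $(3+2\lambda_i\lambda_k+\lambda_i^2)/[(1+\lambda_i^2)^2(1+\lambda_k^2)]$, whose excess over the Cauchy-Schwarz weight is $(3+2\lambda_i\lambda_k)/[(1+\lambda_i^2)^2(1+\lambda_k^2)]$, nonnegative precisely when $\lambda_i\lambda_k\ge 0$; the coefficient of $u_{ijk}^2$ with $i,j,k$ all distinct has no Cauchy-Schwarz counterpart, and its nonnegativity likewise uses $\lambda_i\lambda_j\ge 0$. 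The equivalent form $\Delta_ga\ge 2|\nabla_ga|^2/a$ for $a=V^{1/n}$ then follows from $\Delta_ga=(a/n)\Delta_g\ln V+(a/n^2)|\nabla_g\ln V|^2$ and $|\nabla_ga|^2=(a^2/n^2)|\nabla_g\ln V|^2$.

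The main obstacle is the bookkeeping required to assemble $\Delta_g\ln V$ from the eigenvalue-perturbation terms, the substitution eliminating $u_{iikk}$, and the symmetrization cancelling the $1/(\lambda_i-\lambda_j)$ denominators. Each step is elementary, but the combinatorics are intricate, and one must verify that convexity enters only in the final coefficient comparison and not implicitly in the earlier identities.
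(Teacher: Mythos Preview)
Your approach is correct and is exactly the direct computation the paper alludes to (and that \cite{CWY09} carries out): diagonalize, express $\Delta_g\ln V$ as a quadratic form in $u_{ijk}$ via eigenvalue perturbation and the twice-differentiated equation, bound $|\nabla_g\ln V|^2$ by Cauchy--Schwarz, and compare coefficients using $\lambda_i\ge 0$. One small wording slip: the excess $(3+2\lambda_i\lambda_k)/[(1+\lambda_i^2)^2(1+\lambda_k^2)]$ is nonnegative whenever $\lambda_i\lambda_k\ge -3/2$, not ``precisely when $\lambda_i\lambda_k\ge 0$''; convexity is merely sufficient here, which is all you need.
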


\noindent
\textbf{4. Jacobi inequality: critical or supercritical phase.}  We order the eigenvalues of the Hessian by $\lambda_1\ge\cdots\ge \lambda_n$.

\begin{prop}[\cite{WY14} Lemma 2.3]
\label{prop:JacCrit}
    Let $u$ be a smooth solution of \eqref{slag} with $\Theta\ge(n-2)\pi/2$.  Suppose $u$ is smooth near $x=p$ and that at $x=p$, $\lambda_1=\cdots=\lambda_m>\lambda_{m+1}$.  Then the function $b_m=m^{-1}\sum_1^m\ln\sqrt{1+\lambda_m^2}$ is smooth near $x=p$, and satisfies
    \eqal{
    \Delta_gb_m\ge M|\nabla_gb_m|^2, \qquad M:=\Big(1-\frac{4}{\sqrt{4n+1}+1}\Big),
    }
    or equivalently for $a_m=\exp(M b_m)$:
    \eqal{
    \label{JacCrit}
    \Delta_ga_m\ge 2\frac{|\nabla_ga_m|^2}{a_m}.
    }
\end{prop}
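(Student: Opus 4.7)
The statement has three ingredients, which I would address in order: smoothness of $b_m$ at the clustering point, an explicit computation of $\Delta_g b_m$ and $|\nabla_g b_m|^2$ in terms of third derivatives of $u$, and an algebraic positivity estimate that invokes the critical phase hypothesis.

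Smoothness follows from the gap $\lambda_m(p) > \lambda_{m+1}(p)$: the top $m$ eigenvalues of $W := D^2 u$ form an isolated spectral cluster near $p$, so $b_m$, being a smooth symmetric function of that cluster, is smooth in $W$ via Riesz projection. Explicitly,
$$ m\, b_m \;=\; \frac{1}{4\pi i}\oint_\gamma \ln(1+z^2)\,\operatorname{tr}\bigl((zI-W)^{-1}\bigr)\,dz $$
for any contour $\gamma$ encircling only $\lambda_1(p),\dots,\lambda_m(p)$ and avoiding $\pm i$.

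Next I would rotate so that $W$ is diagonal at $p$ with $\lambda_i = u_{ii}$. Applying the chain rule to the trace expression above and using standard first- and second-order eigenvalue perturbation — simplified by the cluster degeneracy $\lambda_1 = \cdots = \lambda_m$ — yields
$$ \pd_k b_m \stackrel{p}{=}\; \frac{\lambda_1}{m(1+\lambda_1^2)}\sum_{i=1}^{m} u_{iik}, $$
together with an explicit formula for $\pd_{kk} b_m$ combining the diagonal terms $u_{iikk}$ with squares of off-diagonal third derivatives $u_{ijk}$ weighted by eigenvalue differences. Differentiating \eqref{slag} once gives $\sum_i g^{ii} u_{iik} = 0$; symmetrizing the twice-differentiated equation (using the identity $(1+\lambda_i^2)^{-1}(\lambda_i - \lambda_j)^{-1} + (1+\lambda_j^2)^{-1}(\lambda_j-\lambda_i)^{-1} = -(\lambda_i+\lambda_j)g^{ii}g^{jj}$) gives $\sum_i g^{ii} u_{iikk} = \sum_{i,j} 2\lambda_i g^{ii}g^{jj} u_{ijk}^2$. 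Substituting into $\Delta_g b_m = g^{kk}\pd_{kk}b_m$ produces an explicit quadratic form in the $u_{ijk}$.

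The hard part, and the origin of the specific exponent, is the pointwise inequality $\Delta_g b_m - M\,|\nabla_g b_m|^2 \ge 0$ with $M = 1 - 4/(\sqrt{4n+1}+1)$. The critical/supercritical hypothesis $\Theta \ge (n-2)\pi/2$ enters via $\lambda_i + \lambda_j \ge 0$ for $i \ne j$, which puts the mixed terms $\lambda_i u_{ijk}^2\, g^{ii} g^{jj}$ on the good side. I would split the $u_{ijk}$ into in-cluster ($i,j \le m$), cross ($i \le m < j$), and out-of-cluster ($i,j > m$) blocks, use $\sum_i g^{ii} u_{iik} = 0$ to eliminate one in-cluster direction, then apply Cauchy--Schwarz with a free parameter $t > 0$ to absorb the cross block into the in-cluster one. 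Positivity of the resulting quadratic form reduces to a scalar quadratic inequality in $t$; optimizing yields the threshold $M = (\sqrt{4n+1}-3)/(\sqrt{4n+1}+1) = 1 - 4/(\sqrt{4n+1}+1)$. The equivalence with \eqref{JacCrit} for $a_m = \exp(M b_m)$ is a routine logarithmic-derivative identity.
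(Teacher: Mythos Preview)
The paper does not give its own proof of this proposition: it is quoted directly from \cite{WY14}, with only the brief remark afterward that smoothness of $b_m$ follows from its symmetry in the degenerate top-$m$ eigenvalues, together with a pointer to \cite{A07} for the second-derivative formulas of symmetric eigenvalue functions. So there is no in-paper argument to compare against; your proposal is in effect a reconstruction of the \cite{WY14} proof.

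As a reconstruction, the outline is sound. Your Riesz-projection route to smoothness is a correct alternative to the symmetry-plus-\cite{A07} remark the paper gives. The first-derivative formula at the diagonal point, the once-differentiated equation $\sum_i g^{ii}u_{iik}=0$, the divided-difference identity for $1/(1+\lambda^2)$, and the resulting expression for $\sum_i g^{ii}u_{iikk}$ are all standard and correctly stated. The geometric input you identify, $\lambda_i+\lambda_j\ge 0$ for $i\ne j$, is exactly what the critical/supercritical hypothesis supplies via the eigenvalue pinching \eqref{order}. The place where the sketch is thin is precisely the step you flag as hard: the claim that after your in-cluster/cross/out-of-cluster block decomposition and a single Cauchy--Schwarz with free parameter $t$, positivity reduces to one scalar quadratic whose optimum is exactly $M=(\sqrt{4n+1}-3)/(\sqrt{4n+1}+1)$. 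The actual combinatorics in \cite{WY14} also distinguishes the summation index $k$ (not only $i,j$) and involves several separate Cauchy--Schwarz applications together with the linearized-equation constraint before a one-parameter optimization emerges; your description compresses this considerably. Nothing in your sketch is wrong, but to turn it into a proof you would have to carry out that optimization explicitly and verify it really produces this particular $M$.
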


\noindent
Note that $b_m$ is symmetric in the degenerate eigenvalues.  See \cite[Theorem 5.1]{A07} for the second derivative calculation of symmetric eigenvalue functions.  One can take a degenerate limit in this calculation if $b_m$ is symmetric.

\noindent
\textbf{5. The $n-1$ convexity for critical or supercritical phases:} We recall the following eigenvalue pinching obtained from \eqref{slag} for large phases using trigonometric identities.

\begin{lemma}[\cite{WY14} Lemma 2.1]
    Suppose the ordered numbers $\lambda_1\ge\cdots\ge\lambda_n$ solve \eqref{slag} with $\Theta\ge (n-2)\pi/2$ and $n\ge 2$.  Then
    \begin{align}
    \label{order}
        \lambda_1\ge\lambda_2\ge&\cdots\ge\lambda_{n-1}>0\text{ and }\lambda_{n-1}\ge|\lambda_n|,\\
        &\lambda_1+n\lambda_n\ge 0,\\
        \label{n-1}
        \sigma_k(\lambda_1,\dots,&\lambda_n)\ge 0\text{ for all }1\le k\le n-1.
    \end{align}
\end{lemma}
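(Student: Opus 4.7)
The plan is to convert the eigenvalue inequalities to angular inequalities via $\theta_i=\arctan\lambda_i\in(-\pi/2,\pi/2)$ and monotonicity of $\tan$, working with the constraint $\sum_i\theta_i=\Theta\ge(n-2)\pi/2$ and the ordering $\theta_1\ge\cdots\ge\theta_n$.

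For \eqref{order}, the key observation is that each $\theta_i<\pi/2$, so $\theta_1+\cdots+\theta_{n-2}<(n-2)\pi/2\le\Theta$, forcing $\theta_{n-1}+\theta_n>0$. Combined with $\theta_{n-1}\ge\theta_n$, this yields $\theta_{n-1}>0$ and, when $\theta_n<0$, also $\theta_{n-1}\ge -\theta_n=|\theta_n|$. Monotonicity of $\tan$ transfers these to $\lambda_1\ge\cdots\ge\lambda_{n-1}>0$ and $\lambda_{n-1}\ge|\lambda_n|$.

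For $\lambda_1+n\lambda_n\ge 0$, the case $\lambda_n\ge 0$ is immediate. If $\lambda_n<0$, set $\phi_i=\pi/2-\theta_i>0$ for $i\le n-1$; the constraint rearranges to $\sum_{i=1}^{n-1}\phi_i\le \pi/2-|\theta_n|$, and since $\phi_1$ is the smallest, $\phi_1\le(\pi/2-|\theta_n|)/(n-1)$, i.e., $\theta_1\ge ((n-2)\pi/2+|\theta_n|)/(n-1)$. A careful tangent-addition computation then converts this lower bound on $\theta_1$ into the desired $\lambda_1\ge n|\lambda_n|$; the critical-phase threshold enters precisely to make the trigonometric estimate close.

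For the G\aa rding-type bound \eqref{n-1}, expand $\sigma_k(\lambda)=\sigma_k(\lambda')+\lambda_n\sigma_{k-1}(\lambda')$ with $\lambda'=(\lambda_1,\dots,\lambda_{n-1})$. By \eqref{order} both terms are nonnegative when $\lambda_n\ge 0$, so assume $\lambda_n<0$; then $|\lambda_n|\le\lambda_{n-1}$ reduces the task to $\sigma_k(\lambda')\ge\lambda_{n-1}\sigma_{k-1}(\lambda')$ for $1\le k\le n-1$. Splitting once more in the index $n-1$ converts this to $\sigma_k(\lambda'')\ge\lambda_{n-1}^2\sigma_{k-2}(\lambda'')$ with $\lambda''=(\lambda_1,\dots,\lambda_{n-2})$, which follows by a combinatorial double-counting: every $(k{-}2)$-subset $J\subset\{1,\dots,n-2\}$ can be enlarged by two indices $p,q\notin J$ with $\lambda_p\lambda_q\ge\lambda_{n-2}^2\ge\lambda_{n-1}^2$, and the induced weighting on $k$-subsets covers every term of $\sigma_k(\lambda'')$ the right number of times.

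The main obstacle is the trigonometric amplification in the second step: naive averaging produces only $\theta_1\ge|\theta_n|$, hence $\lambda_1\ge|\lambda_n|$, which falls short of the factor $n$ as $|\theta_n|\nearrow\pi/2$. Bridging this gap requires either a quantitative use of the full distribution of $\phi_1,\dots,\phi_{n-1}$ rather than $\phi_1$ alone, or passing to the real and imaginary parts of the identity $e^{i\Theta}=\prod\cos\theta_j\cdot\prod(1+i\lambda_j)$ to expose the $\sigma_k$ structure and play the sign of $\sin\Theta,\cos\Theta$ at critical phase against the elementary symmetric polynomials directly.
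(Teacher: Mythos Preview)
The paper does not prove this lemma; it simply recalls it from \cite{WY14}, so there is no in-paper argument to compare against. Evaluating your proposal on its own merits: the argument for \eqref{order} is correct and standard, and you are candid that the second step ($\lambda_1+n\lambda_n\ge 0$) is left unfinished---your angular averaging only yields $\phi_1\le(\pi/2-|\theta_n|)/(n-1)$, which in the limit $|\theta_n|\nearrow\pi/2$ gives $\lambda_1\gtrsim (n-1)|\lambda_n|$, one short of the stated factor.

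The genuine gap is in your treatment of \eqref{n-1}. After two expansions you reduce to proving $\sigma_k(\lambda'')\ge\lambda_{n-1}^2\,\sigma_{k-2}(\lambda'')$ for $\lambda''=(\lambda_1,\dots,\lambda_{n-2})$, but this inequality is \emph{false} for the top value $k=n-1$: since $\lambda''$ has only $n-2$ entries, $\sigma_{n-1}(\lambda'')=0$, while $\sigma_{n-3}(\lambda'')>0$, so the inequality reads $0\ge\lambda_{n-1}^2\sigma_{n-3}(\lambda'')>0$. More concretely, already your intermediate claim $\sigma_k(\lambda')\ge\lambda_{n-1}\sigma_{k-1}(\lambda')$ fails: take $n=3$, $\lambda'=(1,1)$, $k=2$, where $\sigma_2(\lambda')=1<2=\lambda_2\sigma_1(\lambda')$. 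The double-counting you sketch yields $\binom{k}{2}\sigma_k(\lambda'')\ge\binom{n-k}{2}\lambda_{n-1}^2\sigma_{k-2}(\lambda'')$, which only goes the right direction when $k\le n/2$. The point is that the pinching $|\lambda_n|\le\lambda_{n-1}$ alone is too crude to force $\sigma_{n-1}\ge 0$; one must use the equation itself, e.g.\ via the identity $\prod_j(1+i\lambda_j)=V e^{i\Theta}$ that you mention at the end, reading off the signs of $\sigma_k$ from those of $\cos\Theta,\sin\Theta$ at the critical and supercritical phases.
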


\noindent
Condition \eqref{n-1} is called \textbf{\textit{n-1} convexity}.  More generally, we say $u$ is $k$-convex if $\sigma_\ell\ge 0$ for $1\le \ell\le k$.  It is interpreted in the viscosity sense for non-smooth $u$.  Condition \eqref{order} is related to the convexity of the PDE level set $F^{-1}\{0\}$, since derivative $1/(1+\lambda_i^2)$ is increasing with $i$; see \cite{Y06} for a proof of this fact.

\smallskip
\noindent
\textbf{6. Closedness of viscosity subsolutions.  } We say that $u$ is a viscosity subsolution of a fully nonlinear elliptic PDE $F(D^2u)=0$, i.e. locally uniformly continuous $F$ satisfies $F(M+N)>F(M)$ for any $N>0$ at each matrix $M$ in a convex cone of symmetric matrices containing the positive definite ones, if $F(D^2Q)\ge 0$ for each quadratic $Q$ touching $u$ from above near a point, or $Q(x_0)=u(x_0)$ with $Q\ge u$ near $x_0\in\Omega$; see \cite[Proposition 2.4]{CC95}.  A smooth viscosity subsolution satisfies $F(D^2u)\ge 0$ pointwise.  A supersolution satisfies the reverse inequality, and a solution is both a subsolution and a supersolution.

\smallskip
Special Lagrangian equation \eqref{slag} is elliptic, and $\sigma_{k}$ is elliptic on the cone of $k$-convex matrices, or $M$'s satisfying $\sigma_\ell(M)\ge 0$ for $1\le \ell\le k$; see \cite[eq. (2.3)]{TW99} for this and similar basics of $k$-convexity.  

\smallskip
We will use the standard fact that the uniform limit of a sequence of viscosity solutions of an elliptic equation is also viscosity.  This is stated in \cite{CC95}, and this basic proof is written down in, for example, \cite[Appendix]{SY23a}.  Since the domain of the special Lagrangian equation is entire, this is clear.  For $k$-convexity, we repeat the proof verbatim to show the known fact that a uniform limit of $k$-convex solutions is also $k$-convex.

\begin{lemma}[\cite{CC95}]
\label{lem:n-1}
    Let $u_k\in C(\Om)$ be a sequence of $k$-convex functions converging uniformly to $u\in C(\Om)$.  Then $u$ is $k$-convex.
\end{lemma}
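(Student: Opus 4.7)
The plan is to run the standard viscosity stability argument, taking care only to produce interior touching points for the approximating sequence. Fix a quadratic polynomial $Q$ touching $u$ from above at some $x_0\in\Om$, so that $Q(x_0)=u(x_0)$ and $Q\ge u$ in a neighborhood of $x_0$. My goal is to verify $\sigma_\ell(D^2Q)\ge 0$ for every $1\le \ell\le k$.

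\medskip
I would first arrange strict touching by perturbing to $Q_\delta := Q+\delta|x-x_0|^2$ for small $\delta>0$: then $Q_\delta\ge u$ in the same neighborhood with equality only at $x_0$, and on a small sphere $\pd B_r(x_0)$ one has $Q_\delta-u\ge \delta r^2>0$. By uniform convergence this strict positivity survives for the approximants, $Q_\delta-u_k\ge \delta r^2/2$ on $\pd B_r(x_0)$ for all large $k$. Setting $c_k:=\inf_{\ov{B_r(x_0)}}(Q_\delta-u_k)$, I get $c_k\le Q_\delta(x_0)-u_k(x_0)\to 0$, and this forces the infimum to be attained at some interior point $x_k\in B_r(x_0)$ (it cannot lie on $\pd B_r(x_0)$, by the strict inequality there). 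Thus $Q_\delta-c_k$ is a quadratic touching $u_k$ from above at $x_k$.

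\medskip
Now I apply the viscosity $k$-convexity of $u_k$ with the test quadratic $Q_\delta-c_k$: since $D^2(Q_\delta-c_k)=D^2Q+2\delta I$, this yields $\sigma_\ell(D^2Q+2\delta I)\ge 0$ for every $1\le\ell\le k$. Sending $\delta\to 0^+$ and using continuity of the elementary symmetric polynomials gives $\sigma_\ell(D^2Q)\ge 0$ for $1\le \ell\le k$, which is what had to be shown. There is no genuine obstacle in the argument; the only point of care is guaranteeing that the touching point $x_k$ lies in the interior, which is precisely why the strict perturbation $Q\leadsto Q_\delta$ is introduced. This is simply the specialization to $F=-\sigma_\ell$ of the standard viscosity-subsolution closedness template in \cite[Proposition 2.4]{CC95}, enforced simultaneously for $\ell=1,\dots,k$.
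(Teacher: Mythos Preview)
Your argument is correct and is precisely the standard viscosity-stability template the paper invokes: the paper does not write out its own proof but simply cites \cite[Proposition 2.4]{CC95} and \cite[Appendix]{SY23a} and says to ``repeat the proof verbatim'' for the $\sigma_\ell$ inequalities, which is exactly what you have done. There is nothing to add.
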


\noindent
\textbf{7. Alexandrov theorem on bounded domains.}  The condition of $k$-convexity leads to an Alexandrov theorem, which is standard for convex, i.e. $n$-convex functions.  Let us verify the standard fact that the ``black box" still works on bounded domains.

\begin{prop}[\cite{CT05}, Theorem 1.1]
\label{prop:Alex}
    Let $u\in C(\Omega)$ for a domain $\Omega\subset\re^n$ with $n\ge 2$.  Suppose $u$ is k-convex for $k>n/2$.  Then $u$ is twice differentiable almost everywhere in $\Omega$.  More precisely, for almost every $x_0\in\Om$, there is a quadratic $Q$ such that $u(x)-Q(x)=o(|x-x_0|^2)$.
\end{prop}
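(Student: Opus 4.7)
The statement is local, so it suffices to prove $u$ is twice differentiable almost everywhere on each relatively compact subdomain $\Omega'\subset\subset\Omega$. The plan is to replace $u$ by a globally defined, continuous $k$-convex function $\tilde u$ on $\re^n$ that coincides with $u$ on $\Omega'$, and then invoke the Chaudhuri-Trudinger black box on $\tilde u$.

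First I would fix intermediate sets $\Omega'\subset\subset\tilde\Omega\subset\subset\Omega$. Since $u\in C(\bar{\tilde\Omega})$, there is $M$ with $|u|\le M$ on $\bar{\tilde\Omega}$. Pick a convex quadratic $\phi(x)=A|x-x_0|^2+B$ centered at some $x_0\in\Omega'$, with $A$ and $B$ chosen so that $\phi<-M-1$ on $\Omega'$ while $\phi>M+1$ on a one-sided collar $U\subset\tilde\Omega$ of $\partial\tilde\Omega$. Define
$$
\tilde u(x)=\begin{cases}\max(u(x),\phi(x)),& x\in\tilde\Omega,\\ \phi(x),& x\in\re^n\setminus\tilde\Omega.\end{cases}
$$
By construction $\tilde u=u$ on $\Omega'$, and $\tilde u\equiv\phi$ on the open set $U\cup(\re^n\setminus\bar{\tilde\Omega})$, which is a neighborhood of $\partial\tilde\Omega$; hence $\tilde u$ is continuous on $\re^n$.

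Next I would check that $\tilde u$ is $k$-convex on all of $\re^n$: on $\tilde\Omega$ this is the viscosity max principle, since any quadratic $Q\ge\max(u,\phi)$ touching at a point $y_0$ must touch one of $u$ or $\phi$ from above at $y_0$, forcing $\sigma_\ell(D^2Q)\ge 0$ for $1\le\ell\le k$ by hypothesis; outside $\tilde\Omega$ it is immediate from the convexity of $\phi$; and across $\partial\tilde\Omega$ it is inherited from the smooth convex $\phi$ on the buffer layer where $\tilde u\equiv\phi$. Applying the Chaudhuri-Trudinger theorem to the globally $k$-convex continuous function $\tilde u$ (either on $\re^n$, or after restriction to a large ball containing $\tilde\Omega$, depending on the ambient formulation of CT05) then yields twice differentiability a.e.; since $\Omega'\subset\subset\Omega$ was arbitrary, this proves the claim. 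The principal delicacy is ensuring that $k$-convexity passes across the interface $\partial\tilde\Omega$, which is precisely what the collar buffer is engineered to achieve; the max-stability step itself is a standard property of viscosity subsolutions of proper elliptic equations.
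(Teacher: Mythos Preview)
Your argument is essentially the same as the paper's: extend $u$ to a global $k$-convex function by taking the maximum with a steep convex quadratic, then invoke \cite{CT05} on $\re^n$. The only imprecision is that a single radial quadratic $\phi(x)=A|x-x_0|^2+B$ cannot always be arranged to lie below $u$ on an \emph{arbitrary} $\Omega'$ while lying above $u$ on a collar of $\partial\tilde\Omega$ (consider $\Omega'$ an annulus); this is cosmetic, since it suffices to take $\Omega'$ a small ball around each point, which is exactly what the paper does.
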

\begin{proof}
    Theorem 1.1 in \cite{CT05} works if $\Omega=\re^n$, so it suffices to extend $u$ to a $k$-convex function $\re^n$ outside a small neighborhood of any point $x_0\in\re^n$.  Since $u$ is continuous, we can choose a tall enough convex polynomial $P$ such that $P(x_0)<0$ but $P(x)>u(x)$ on some $\pd B_r(x_0)\subset\subset \Omega$.   Since $P$ is convex, it is $k$-convex, so if we define $\bar u:=\max(P,u)$ on $B_r(x_0)$ and $\bar u=P$ outside $B_r(x_0)$, this is a  viscosity subsolution of $\sigma_\ell\ge 0$, hence $k$-convex.  By Alexandrov theorem \cite[Theorem 1.1]{CT05}, we conclude $\bar u$ is second order differentiable almost everywhere, hence $u$ is also Alexandrov on a neighborhood of $x_0$.  Varying $x_0\in\Om$, we conclude the proof.
\end{proof}

\noindent
\textbf{8. Savin small perturbation theorem.  }We restate Savin \cite[Theorem 1.3]{S07} for equations $F(M)$ only depending on the Hessian, defined on $\text{Sym}(n;\re)$

\begin{prop}[\cite{S07}, Theorem 1.3]
\label{prop:Savin}
Let $F(D^2u)$ satisfy the following hypotheses:

(i) $F$ is elliptic, $F(M+N)>F(M)$ if $N>0$,

(ii)  $F(0)=0$,

(iii) $F\in C^2$.

\noindent
Then there exists $c_1$ small enough depending on $n,F$ such that if viscosity solution $u$ of $F(D^2u)=0$ satisfies flatness $\|u\|_{L^\infty(B_1(0))}\le c_1$, then $u\in C^{2,\alpha}(B_{1/2})$ with $\|u\|_{C^{2,\al}(B_{1/2})}\le 1$.  
\end{prop}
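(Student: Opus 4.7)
The plan is to use the classical compactness and improvement-of-flatness iteration from perturbation theory of fully nonlinear elliptic equations. The motivating observation is that since $F(0)=0$ and $F$ is $C^2$ with ellipticity $F_M(0)>0$, a sufficiently flat viscosity solution of $F(D^2u)=0$ should resemble a solution of the constant-coefficient linearized equation $a^{ij}\partial_{ij}v=0$ with $a^{ij}=F_{M_{ij}}(0)$, which has smooth interior estimates. We then iterate this linear approximation at dyadic scales.

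First I would prove the following approximation lemma by compactness: for every $\eta>0$ there exists $c_1(\eta,n,F)>0$ such that any viscosity solution $u$ of $F(D^2u)=0$ with $\|u\|_{L^\infty(B_1)}\le c_1$ admits $h$ solving $a^{ij}\partial_{ij}h=0$ in $B_{3/4}$ with $\|u-h\|_{L^\infty(B_{3/4})}\le \eta\,\|u\|_{L^\infty(B_1)}$. Argue by contradiction: take $u_k$ with $c_k=\|u_k\|_{L^\infty(B_1)}\to 0$ failing this. Normalize $v_k=u_k/c_k$; then $v_k$ is a viscosity solution of $G_k(D^2v_k)=0$ where $G_k(M):=c_k^{-1}F(c_k M)$ satisfies $G_k(0)=0$ and converges in $C^0_{\mathrm{loc}}$ to the linear operator $M\mapsto a^{ij}M_{ij}$. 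Krylov–Safonov $C^{\alpha}$ estimates, which apply uniformly thanks to ellipticity (i) and the $C^1$ control of $F$ at $0$, give a convergent subsequence $v_k\to v_\infty$ with $\|v_\infty\|_\infty\le 1$. The standard viscosity-solution stability passes to the limit, giving $a^{ij}\partial_{ij}v_\infty=0$; then $v_\infty$ is smooth, contradicting the assumed non-closeness for the prelimit.

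Next, I would iterate: from smoothness of $h$, extract the quadratic Taylor polynomial $P$ at $0$ with $\|h-P\|_{L^\infty(B_r)}\le Cr^3$. Combined with the approximation lemma, $\|u-P\|_{L^\infty(B_r)}\le C r^3 + \eta c_1$. Choose $r$ small so $Cr^3\le \tfrac12 r^{2+\alpha}$, then $\eta$ small so the full bound is $\le r^{2+\alpha}$. Rescale: $\bar u(x):=r^{-2}(u(rx)-P(rx))$ is a viscosity solution of a rescaled equation $\bar F(M):=F(D^2P+M)$ which still satisfies (i)--(iii) with controlled $C^2$ norm, and is now flat at scale $c_1$ again. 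Iterating at geometric scales $r^k$ produces quadratic polynomials approximating $u$ to order $|x|^{2+\alpha}$ at $0$, which by translation holds at any interior base point. A Campanato-type characterization then yields $u\in C^{2,\alpha}(B_{1/2})$ with the stated bound.

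The hardest part is the compactness step, specifically verifying that Krylov–Safonov applies uniformly to the rescaled sequence $G_k$; this requires that the Pucci extremal operators associated to $G_k$ remain uniformly elliptic, which is ensured by $F\in C^2$ and nondegenerate ellipticity of $DF(0)$. A secondary technical issue is checking that the rescaled equations $\bar F$ preserve hypotheses (i)--(iii) with uniform constants through each iteration, so the approximation lemma can be reapplied at every scale with the same $c_1$; this is why the hypothesis $F\in C^2$ (rather than $C^{1,\alpha}$) is needed. The proof is inherently viscosity-theoretic, which is essential since a priori regularity of $u$ is not assumed.
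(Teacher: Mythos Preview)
The paper does not prove this proposition; it is quoted verbatim as Theorem~1.3 of Savin \cite{S07} and used as a black box. So there is no paper-proof to compare against.

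That said, your outline has a genuine gap at the compactness step. You invoke Krylov--Safonov $C^\alpha$ estimates for the normalized solutions $v_k$ of $G_k(D^2v_k)=0$, but Krylov--Safonov requires \emph{uniform} ellipticity, i.e.\ global Pucci bounds $\mathcal M^-_{\lambda,\Lambda}(N)\le G_k(M+N)-G_k(M)\le \mathcal M^+_{\lambda,\Lambda}(N)$. Hypothesis~(i) is only strict (degenerate) ellipticity, and $F\in C^2$ with $DF(0)>0$ yields uniform ellipticity only on a \emph{bounded} matrix neighborhood of the origin. Since $v_k$ is merely a continuous viscosity solution with $\|v_k\|_{L^\infty}\le 1$, you have no a priori control on $D^2v_k$, so you cannot localize to that neighborhood. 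For the special Lagrangian operator $F(M)=\sum_i\arctan\lambda_i(M)-\Theta$ used throughout the paper, $DF(M)\to 0$ as $|M|\to\infty$, and Krylov--Safonov is simply unavailable.

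This is not a technicality but the whole point of Savin's theorem: the small-perturbation result holds even when $F$ is uniformly elliptic only near the fixed solution $u\equiv 0$. Savin's actual argument bypasses Krylov--Safonov via a measure-theoretic sliding argument producing a \emph{partial} Harnack inequality: if a flat solution lies above a paraboloid touching from below, then on a smaller ball it separates from that paraboloid on a set of definite measure. Iterating this yields oscillation decay and the improvement-of-flatness without ever invoking ellipticity at large Hessians. Your post-approximation iteration and rescaling are fine; it is the harmonic-approximation/compactness step that must be replaced by Savin's partial Harnack machinery.
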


\noindent
\textbf{9. Partial regularity.  }Suppose $u$ is a viscosity solution of \eqref{slag} with Alexandrov, or $D^2u$ exists a.e..  Then by combining with Savin, we deduce the singular set of $u$ is closed and measure zero (partial regularity).  Indeed, if $u-Q=o(|x|^2)$, then one can apply Savin Proposition \ref{prop:Savin} to $v_r(x)=(u(rx)-Q(rx))/r^2=o(r^2)/r^2$.  This function is flat and solves $G(D^2v)=F(D^2Q+D^2v)=0$ with $G(0)=0$, so Savin gives $C^{2,\al}$ regularity nearby the Alexandrov point.  This shows the set of second order differentiable points is open, full measure, and contained in the $C^\infty$ set.   In particular, we have partial regularity for \eqref{slag} in the cases of convex solutions and critical or supercritical phases.

\section{Doubling for convex or semi-convex solutions}

This section establishes a doubling inequality for the Hessian.  First the convex solution case, with Jacobi inequality \eqref{JacCon}.  Recall that a proper function $f$ satisfies $f^{-1}(B)$ bounded for bounded set $B$.

\begin{prop}
\label{prop:doubCon}
    Let $u\in C^\infty$ solve \eqref{slag} on $B_2(0)$ with $u$ convex.  Then for any $y\in B_{1/2}(0)$, there exists $R(n,\|u\|_{C^{0,1}(B_1(0))})>0$ small enough such that for any $r\le R$,
    \eqal{
    \sup_{B_R(y)}a(D^2u)\le C(r,n,\|u\|_{C^{0,1}(B_1(0))})\sup_{B_r(y)}a(D^2u).
    }
    By the properness of $a=V^{1/n}$, we obtain
    \eqal{
    \sup_{B_R(y)}|D^2u|\le C\Big(r,n,\|u\|_{C^{0,1}(B_1(0))},\sup_{B_r(y)}|D^2u|\Big).
    }
\end{prop}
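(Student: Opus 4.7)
The plan is to set up a pointwise maximum principle argument on $B_R(y)$ driven by the Chen--Warren--Yuan Jacobi inequality \eqref{JacCon}. After translating so that $y=0$, set $M_r := \sup_{B_r}a$. It suffices to construct an auxiliary function $\Phi$ that is uniformly bounded on $\bar B_r$ but diverges to $-\infty$ as $|x|\to R$, such that the maximum of $W := 2\log a + \Phi$ over $\bar B_R$ is forced into $\bar B_r$. Once this localization is established, comparing $W(x)\le W(x_*)$ for the maximizer $x_*\in\bar B_r$ yields
$$2\log a(x) \le 2\log M_r + \max_{\bar B_r}\Phi - \Phi(x),$$
which is the desired doubling on any concentric $B_{R'}$ with $R'<R$, with constant depending on $r$, $R'$, $n$ and $\|u\|_{C^{0,1}(B_1)}$; a standard shrink-and-cover reduction then gives the stated form with $R$.

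Following the blueprint in the introduction, I would try the ansatz
$$\Phi(x)=\alpha\bigl(x\cdot Du(x) - u(x)\bigr) - \beta\,\omega(|x|),$$
with a radial barrier $\omega(s)\to+\infty$ as $s\to R$, say $\omega(s)=-\log(1-s^2/R^2)$, and $\alpha,\beta>0$ chosen in terms of $n$, $R$ and $\|u\|_{C^{0,1}(B_1)}$. The Guan--Qiu piece $x\cdot Du-u$ is bounded in absolute value by $2\|u\|_{C^{0,1}(B_1)}$; at a diagonal point $\partial_i(x\cdot Du-u)=x_i\lambda_i$, and using that $Du$ is $\Delta_g$-harmonic, its Laplace--Beltrami simplifies to $\sum_i\lambda_i/(1+\lambda_i^2)\ge 0$ by convexity. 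Suppose, for contradiction, the maximum of $W$ occurs at some $x_0\in B_R\setminus\bar B_r$. Then $\nabla W(x_0)=0$ and $\Delta_gW(x_0)\le 0$; combined with \eqref{JacCon} rewritten as $\Delta_g\log a\ge|\nabla_g\log a|^2$, this forces
$$|\nabla_g\Phi(x_0)|^2+\Delta_g\Phi(x_0)\le 0.$$
The goal is then to select $\alpha$, $\beta$ and $\omega$ so that the left-hand side is \emph{strictly positive} for all admissible $\lambda_1,\ldots,\lambda_n>0$ and all $x_0\in B_R\setminus\bar B_r$, closing the contradiction.

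The main obstacle is the absence of slack in the convex Jacobi inequality: the constant ``$2$'' in \eqref{JacCon} is precisely what is needed to pass from $\Delta_gW\le 0$ to $|\nabla_g\Phi|^2+\Delta_g\Phi\le 0$ at the critical point, with no margin to absorb the cross interactions between the Guan--Qiu piece and the barrier. Compounding this, both the ellipticity weights $1/(1+\lambda_i^2)$ and the Guan--Qiu weight $\lambda_i/(1+\lambda_i^2)$ collapse in the limits $\lambda_i\to 0$ and $\lambda_i\to\infty$, while the negative $\Delta_g(-\beta\omega)$ contribution from the radial barrier must remain dominated by them. The fix I would pursue is to tune $\omega$ with a singularity mild enough that $|\nabla_g\omega|^2$ controls $\Delta_g\omega$ pointwise, so that the positive kinetic term $|\nabla_g\Phi|^2$ is what ultimately carries the inequality; the sign-indefinite cross term can be split by Cauchy--Schwarz and absorbed into the quadratic $\alpha$- and $\beta$-pieces. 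Verifying this pointwise inequality uniformly across all diagonal eigenvalue configurations, and pinning down $\alpha$, $\beta$ in terms of $n$, $R$ and $\|u\|_{C^{0,1}(B_1)}$ alone, is the technical heart of the proof.
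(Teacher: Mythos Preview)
Your proposal has the right architecture---maximum principle, Jacobi inequality, Guan--Qiu radial derivative---but it is incomplete and misses the algebraic simplification that makes the paper's argument a two-line computation rather than the delicate balancing act you describe.

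The decisive difference is in the choice of $\varphi$. You subtract a radial barrier $\beta\omega(|x|)$ from the Guan--Qiu term; the paper instead \emph{adds} a convex quadratic $t|x-y|^2/2$ to it:
\[
\varphi = (x-y)\cdot Du - u + u(y) + \tfrac{t}{2}|x-y|^2.
\]
At a diagonal point this gives $\partial_i\varphi = (x_i-y_i)(\lambda_i + t)$, so the gradient \emph{factors}. Wrapped in the Korevaar exponential $\eta=(e^{(1-\varphi)/h}-1)_+$, the Jacobi inequality at the maximum of $\eta a$ reduces to $|\nabla_g\varphi|^2 \le h\,\Delta_g\varphi$. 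Now both sides are transparent: since $\lambda_i\ge 0$ and $t\ge 1$, one has $(\lambda_i+t)^2\ge 1+\lambda_i^2$, hence
\[
|\nabla_g\varphi|^2=\sum_i\frac{(x_i-y_i)^2(\lambda_i+t)^2}{1+\lambda_i^2}\ge |x-y|^2,
\qquad
\Delta_g\varphi=\sum_i\frac{\lambda_i+t}{1+\lambda_i^2}\le C(n)t,
\]
and the contradiction $|x-y|^2\le C(n)ht\le r^2$ is immediate for $h=r^2/C(n)t$. No cross terms, no regime splitting, no tuning of two independent parameters against each other.

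Your separate barrier $\omega$ destroys this factoring: the derivative becomes $\alpha x_i\lambda_i - \beta\omega'(|x|)x_i/|x|$, and the cross interaction you worry about is genuine, not merely technical. You correctly identify that the Jacobi constant leaves no slack, which is exactly why the paper arranges for the inequality to carry a small parameter $h$ rather than trying to win by sign. Also note a small slip: with $W=2\log a+\Phi$ and $\Delta_g\log a\ge|\nabla_g\log a|^2$, the critical-point inequality is $\tfrac{1}{2}|\nabla_g\Phi|^2+\Delta_g\Phi\le 0$, not $|\nabla_g\Phi|^2+\Delta_g\Phi\le 0$.
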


Next the semi-convex solution with a Jacobi inequality case.

\begin{prop}
\label{prop:doubSemi}
    Let $u\in C^\infty$ solve \eqref{slag} on $B_2(0)$ with $u$ semi-convex $D^2u\ge -KI$ with a Jacobi inequality $\Delta_ga\ge 2|\nabla_ga|^2/a$.  Then for any $y\in B_{1/2}(0)$ and $0<r\le 1/4$, there exists $R(n,K,\|u\|_{C^{0,1}(B_1(0))})>0$ small enough such that
    \eqal{
    \sup_{B_R(y)}a(D^2u)\le C(r,K,n,\|u\|_{C^{0,1}(B_1(0))})\sup_{B_r(y)}a(D^2u).
    }
    By the properness of $a$, we obtain
    \eqal{
    \sup_{B_R(y)}|D^2u|\le C\Big(r,K,n,\|u\|_{C^{0,1}(B_1(0))},a,\sup_{B_r(y)}|D^2u|\Big).
    }
\end{prop}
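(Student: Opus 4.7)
The plan is to adapt the Korevaar-Guan-Qiu cutoff argument that proves the convex doubling (Proposition \ref{prop:doubCon}), replacing convexity of $u$ by $D^2u\ge-KI$ and invoking the Jacobi inequality as a hypothesis rather than deriving it.

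\textbf{Test function and dichotomy.} Fix $y\in B_{1/2}(0)$ and a small $R>0$ to be chosen depending on $n,K,\|u\|_{C^{0,1}(B_1)}$; without loss of generality assume $r\le R/2$ (otherwise $B_R\subset B_r$ and the statement is trivial with $C=1$). Set
\[
w(x)=a(D^2u(x))\,\phi(x),\qquad \phi(x)=\bigl[R^2-|x-y|^2-\tau\bigl((x-y)\cdot Du(x)-u(x)+u(y)\bigr)\bigr]_+^{\beta},
\]
with constants $\tau,\beta>0$ to be fixed. By the $C^{0,1}$ bound on $u$, $|(x-y)\cdot Du-u+u(y)|\le C\|u\|_{C^{0,1}(B_1)}|x-y|$, so for $\tau$ sufficiently small (and $\tau K<2$) we have $\phi\asymp (R^2-|x-y|^2)^{\beta}$ on $B_R(y)$; in particular $\phi\ge cR^{2\beta}$ on $B_{R/2}(y)\supset B_r(y)$ and $\phi\le R^{2\beta}$ everywhere. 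Let $x_0\in\overline{B_R(y)}$ achieve $\sup w$. If $x_0\in\overline{B_r(y)}$, then $w(x')\le w(x_0)\le a(x_0)R^{2\beta}$ for every $x'\in B_{R/2}(y)$, giving $a(x')\le Ca(x_0)\le C\sup_{B_r(y)}a$ with $C$ depending on $\beta$ (hence, ultimately, on $r$) through the lower bound on $\phi$; renaming the outer radius to $R/2$ gives the claimed doubling. Otherwise $x_0\in B_R(y)\setminus\overline{B_r(y)}$, where $w$ is smooth with $\nabla w(x_0)=0$ and $\Delta_g w(x_0)\le 0$.

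\textbf{Jacobi cancellation.} At the critical point, $\nabla_g a=-(a/\phi)\nabla_g\phi$, which yields the cross-term identity $2\langle\nabla_g a,\nabla_g\phi\rangle_g=-2\phi|\nabla_g a|_g^2/a$. This is exactly balanced by the Jacobi lower bound $\phi\Delta_g a\ge 2\phi|\nabla_g a|_g^2/a$. Hence, writing $\phi=h^\beta$,
\[
0\ge\Delta_g w(x_0)\ge a(x_0)\,\Delta_g\phi(x_0)=a(x_0)\,\beta\, h^{\beta-2}\bigl[h\,\Delta_g h+(\beta-1)|\nabla_g h|_g^2\bigr]\Big|_{x_0},
\]
reducing the proof to showing the bracket is strictly positive at $x_0$.

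\textbf{Strict positivity.} Diagonalize $D^2u(x_0)$. Differentiating \eqref{slag} gives the harmonic coordinate identity $g^{ii}u_{iik}=0$, eliminating the only third-order contribution and leaving
\[
\Delta_g h=-2\textstyle\sum_i g^{ii}-\tau\textstyle\sum_i g^{ii}\lambda_i,\qquad |\nabla_g h|_g^2=\textstyle\sum_i\tfrac{(2+\tau\lambda_i)^2\,(x-y)_i^2}{1+\lambda_i^2}.
\]
Semi-convexity $\lambda_i\ge-K$ with $\tau K<2$ ensures $(2+\tau\lambda_i)^2/(1+\lambda_i^2)\ge c_1(\tau,K)>0$ uniformly on $\lambda_i\in[-K,\infty)$; hence $|\nabla_g h|_g^2\ge c_1 r^2$ since $|x_0-y|\ge r$. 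Meanwhile $|\sum g^{ii}\lambda_i|\le n/2$ bounds $h|\Delta_g h|\le C_2(n,\tau)R^2$. Taking $\beta$ large enough (depending on $R,r,\tau,K,n$) forces $(\beta-1)|\nabla_g h|_g^2>h|\Delta_g h|$, contradicting the inequality $\Delta_g\phi(x_0)\le 0$. Properness of $a$ then upgrades the $a$-doubling to the Hessian statement.

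\textbf{Main obstacle.} The parameter chain must be closed consistently: $\tau$ small so $\phi>0$ and $\tau K<2$; $R$ small so the cutoff is localized and $h>0$ on its support; $\beta$ large so the quadratic term in $|\nabla_g h|^2$ dominates the $R^2$-scale $h\Delta_g h$. The semi-convex error $-\tau\sum g^{ii}\lambda_i$, which in the fully convex case $\lambda_i\ge 0$ is automatically favorable concavity, is the essential reason why $R$ must depend on $K$ as stated. One must also verify that the lower bound $c_1(\tau,K)$ for $(2+\tau\lambda)^2/(1+\lambda^2)$ is genuinely uniform over $\lambda\in[-K,\infty)$, which uses that the only zero of the numerator occurs at $\lambda=-2/\tau<-K$ when $\tau K<2$.
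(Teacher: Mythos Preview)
Your argument is essentially the paper's own proof, repackaged: the paper uses the Korevaar exponential cutoff $\eta=(e^{(1-\varphi)/h}-1)_+$ with $\varphi=(x-y)\cdot Du-u+u(y)+t|x-y|^2/2$ and obtains $|\nabla_g\varphi|^2\le h\,\Delta_g\varphi$ at the maximum, whereas you use the polynomial cutoff $h^\beta$ with $h=R^2-|x-y|^2-\tau\bigl((x-y)\cdot Du-u+u(y)\bigr)$ and obtain $(\beta-1)|\nabla_g h|^2\le -h\,\Delta_g h$; the substitutions $t\leftrightarrow 2/\tau$ and $h_{\text{paper}}\leftrightarrow 1/(\beta-1)$ make the two computations line up. Your uniform lower bound $c_1(\tau,K)=\inf_{\lambda\ge -K}(2+\tau\lambda)^2/(1+\lambda^2)>0$ under $\tau K<2$ is exactly the content of the paper's two subcases $\lambda_i\le 3K$ versus $\lambda_i>3K$ (with $t\ge 2K$), just stated without the case split.

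One genuine imprecision to fix: in the semi-convex case the Guan--Qiu term satisfies only $(x-y)\cdot Du-u+u(y)\ge -\tfrac{K}{2}|x-y|^2$, so $h$ need not vanish on $\partial B_R(y)$ and your maximum of $w$ over $\overline{B_R(y)}$ could land on that boundary, invalidating the interior first/second order conditions. The easy remedy is to maximize over $\overline{\{h>0\}}$, which is contained in $B_{R/\sqrt{1-\tau K/2}}(y)\subset B_1(0)$ for $R$ small; your contradiction argument then applies verbatim since you only use $|x_0-y|\ge r$. With that correction the proof is complete.
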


We prove this in two separate cases of the same calculation.

\begin{proof}

\medskip
Letting $h\ll 1$ and $t\gg 1$ with $y\in B_{1/2}(0)$, we define a Korevaar \cite{K87} exponential cutoff using a Guan-Qiu \cite{GQ19} type radial derivative for the phase:
\eqal{
\eta=\Big(e^{(1-\vp)/h}-1\Big)_+,\qquad \vp=(x-y)\cdot Du-u+u(y)+t|x-y|^2/2.
}
We make sure $t\ge C(\|u\|_{C^{0,1}(B_1)})$ is large enough for $\vp> 1$ on $\pd B_{1/2}(y)$.  Then $\text{supp}(\eta)\subset \subset B_{1}(0)$.  Also $B_r(y)\subset\subset \text{supp}(\eta)$ for $r\le R(t,\|u\|_{C^{0,1}(B_1)})$ small enough.  Note that we continuously extend $\eta=0$ outside the connected component of $B_1(0)$ containing $x=y$.

\smallskip
We now start with a standard calculation.  At the max point of $\eta a$, we know
\eqal{
\nabla_g a=-a\nabla_g\eta/\eta,
}
so the Jacobi implies
\eqal{
\label{reverse_Jac}
0&\ge a\Delta_g \eta+2\langle\nabla_g\eta,\nabla_g a\rangle+\Delta_g a\\
&=a\Delta_g\eta+\eta(\Delta_g a-2|\nabla_g a|^2/a)\\
&\ge a\Delta_g\eta.
}
Therefore,
\eqal{
\label{Jac}
|\nabla_g\vp|^2\le h\Delta_g\vp.
}
The RHS at a diagonal point $u_{ii}=\lambda_i$ with $\lambda_1\ge\lambda_n$ (omitting sums):
\eqal{
\Delta_g\vp=\frac{\lambda_i+t}{1+\lambda_i^2}\le Ct.
}
The LHS:
\eqal{
|\nabla_g\vp|^2=(x_i-y_i)^2\frac{(\lambda_i+t)^2}{1+\lambda_i^2}.
}
\textbf{(i) Suppose $u$ is convex:} since $\lambda_i\ge 0,t>1$, we get
\eqal{
(x_i-y_i)^2\frac{t^2+2t\lambda_i+\lambda_i^2}{1+\lambda_i^2}\ge |x-y|^2.
}
We obtain
\eqal{
|x-y|^2\le Cht\le r^2
}
if $h=r^2/Ct$.  

\smallskip
Therefore, the maximum value occurs on the boundary of $B_1(0)\setminus B_r(y)$.  Since $\eta=0$ on $\pd B_1(0)$, it occurs on $\pd B_r(y)$.  Using $\eta>0$ on $B_R(y)$, we obtain the doubling inequality
\eqal{
\sup_{B_R(y)}a&\le C\sup_{B_R(y)}\eta a\\
&\le C\sup_{B_1(0)}\eta a\\
&\le C\sup_{\pd B_r(y)}\eta a\\
&\le C\sup_{B_r(y)}a.
}
Here, $C=C(r,n,\|u\|_{C^{0,1}(B_1(0)})$.  

\medskip
\noindent
\textbf{(ii) Suppose $u$ semi-convex $\lambda_i\ge -K$.}  We first ensure $t\ge 2K$.  Suppose $|x_i-y_i|\ge |x-y|/\sqrt{n}$ for some $1\le i\le n$.

\smallskip
\noindent
\textbf{Subcase $\lambda_i\le 3K$.}  Then by 
$$
(\lambda_i+t)^2=\Big((\lambda_i+K)+(t-K)\Big)^2\ge (t-K)^2\ge K^2,
$$
we get
\eqal{
(x_i-y_i)^2\frac{(\lambda_i+t)^2}
{1+\lambda_i^2}\ge (x_i-y_i)^2\frac{K^2}{1+9K^2}\ge c|x-y|^2.
}
\textbf{Subcase $\lambda_i>3K$.}  Supposing also $t>1$, then as in the convex case,
\eqal{
(x_i-y_i)^2\frac{t^2+2t\lambda_i+\lambda_i^2}{1+\lambda_i^2}\ge |x-y|^2/n.
}
Overall, we obtain from \eqref{Jac}
\eqal{
|x-y|^2\le C(n,K)ht\le r^2,
}
if $h\le C(n,K,t)r^2$.  As in case (i) above, we obtain the doubling inequality, noting the dependence $t=t(n,K,\|u\|_{C^{0,1}(B_1(0))})$.

\end{proof}

\section{Doubling for critical special Lagrangian equation by singular cutoff}


We establish the doubling inequality for critical phases $\Theta\ge(n-2)\pi/2$.  Recall \eqref{JacCrit}.

\begin{prop}
\label{prop:doubCrit}
    Let $u\in C^\infty$ solve \eqref{slag} on $B_2(0)$ with $\Theta\ge (n-2)\pi/2$.  Then for any $y\in B_{1/2}(0)$ and $r<1/4$,
    \eqal{
    \sup_{B_{1/4}(y)}a_1(D^2u)\le C(r,n,\|u\|_{C^{0,1}(B_1(0))})\sup_{B_r(y)}a_1(D^2u).
    }
    By the pinching \eqref{order}, we obtain properness, and conclude
    \eqal{
    \sup_{B_{1/4}(y)}|D^2u|\le C\Big(r,n,\|u\|_{C^{0,1}(B_1(0))},\sup_{B_r(y)}|D^2u|\Big).
    }
\end{prop}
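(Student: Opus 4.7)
The plan is to adapt the doubling argument of Propositions \ref{prop:doubCon}--\ref{prop:doubSemi} to the critical/supercritical setting. Two new difficulties appear. First, the Jacobi inequality in Proposition \ref{prop:JacCrit} holds for $a_m$ only at points where $\lambda_1=\cdots=\lambda_m>\lambda_{m+1}$, so $a_1$ itself is in general only continuous. Second, the pinching \eqref{order} allows $\lambda_n\ge-\lambda_1/n$, so the Korevaar factor $(\lambda_n+t)^2/(1+\lambda_n^2)$ can degenerate in the $\lambda_n$-direction, unlike in the convex or semi-convex cases. As advertised in the introduction, the remedy is to add a singular Green's-type radial term to the Korevaar--Guan-Qiu cutoff and to swap $a_1$ for $a_m$ at the maximum point.

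\textbf{Singular cutoff.} I will take
\[
\eta=(e^{(1-\vp)/h}-1)_+,\qquad
\vp=(x-y)\cdot Du-u+u(y)+\tfrac{t}{2}|x-y|^2+s\,\psi(|x-y|),
\]
where $\psi(\rho)$ is an increasing, concave, Green's-type radial profile with $\psi(\rho)\to-\infty$ as $\rho\to 0^+$ (e.g.\ $\psi(\rho)=-\rho^{-\al}$ for a suitable $\al>0$). The parameters $s,t\ge 1$, depending on $n,r,\|u\|_{C^{0,1}(B_1(0))}$, are chosen so that $\vp\ge 1$ on $\pd B_{1/2}(y)$ (where $\tfrac{t}{2}|x-y|^2$ dominates after absorbing the Lipschitz-bounded Guan-Qiu piece) while $s\psi(|x-y|)$ pulls $\vp$ well below $1$ on the intermediate annulus and down to $-\infty$ at $y$. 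Then $\eta$ vanishes on $\pd B_{1/2}(y)$, is bounded below on $B_{1/4}(y)\sm\ov{B_{r_0}(y)}$ for some $r_0\ll r$, and has an isolated singularity at $y$; the singularity is safely trapped in $B_{r_0}(y)\subset\subset B_r(y)$, which we exclude from the computation.

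\textbf{Pointwise step.} Let $x_0$ be a maximum of $\eta a_1$ over $\ov{B_{1/2}(y)}\sm B_{r_0}(y)$. If $x_0\in\ov{B_r(y)}$, the doubling is immediate; otherwise let $m$ be the multiplicity of $\lambda_1(x_0)$. Since $|\lambda_i|\le\lambda_1$ by \eqref{order}, $a_m\le a_1$ globally with equality at $x_0$, so $\eta a_m$ also attains its maximum at $x_0$, and Proposition \ref{prop:JacCrit} applies. Reproducing \eqref{reverse_Jac}--\eqref{Jac} gives $|\nabla_g\vp|^2\le h\De_g\vp$ at $x_0$. After a rotation to a diagonal frame, $\pd_i\vp=(x_i-y_i)\bigl[\lambda_i+t+s\psi'(\rho)/\rho\bigr]$ with $\rho=|x-y|$, and $\De_g\vp\le Ct+s\,\|\De_g\psi\|_\infty$ is bounded on the annulus. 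For the left-hand side, the indices $i\le n-1$ contribute $\ge c|x_i-y_i|^2$ as in the convex case (since $\lambda_i\ge 0$); the $i=n$ bracket $\lambda_n+t+s\psi'/\rho$ is prevented from vanishing by the added positive radial gradient $s\psi'/\rho$, which one tunes to cover the admissible range $\lambda_n\ge-\lambda_1/n$.

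\textbf{Conclusion and main obstacle.} Summing the directional estimates and combining with $|\nabla_g\vp|^2\le h\De_g\vp$ yields $|x_0-y|^2\le Ch(t+s)\le r^2$ for $h$ chosen small, contradicting $x_0\notin\ov{B_r(y)}$. The doubling for $a_1$ then follows exactly as in Proposition \ref{prop:doubCon}, and the pinching \eqref{order} upgrades it to a doubling for $|D^2u|$. The principal obstacle is the $i=n$ analysis: one must calibrate $\psi,s,t$ so that the combined bracket $\lambda_n+t+s\psi'/\rho$ stays uniformly bounded away from zero over the admissible Hessian range, in particular in the delicate regime $\lambda_n\sim-t-s\psi'/\rho$ (which forces $\lambda_1\gtrsim ns\psi'/\rho$ and should self-regularize for large $\lambda_1$). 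Coordinating this with the multiplicity swap $a_1\leadsto a_m$ and with the interior singularity of $\eta$ at $y$ is the technical heart of the proof.
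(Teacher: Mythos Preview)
Your proposal contains the right architecture---the singular Korevaar/Guan--Qiu cutoff and the multiplicity swap $a_1\leadsto a_m$ at the max point---and these are exactly the paper's ingredients. However, there is a genuine gap in the $i=n$ step, and it is precisely the place you flag as ``the principal obstacle.''

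You try to arrange that the bracket $\lambda_n+t+s\psi'(\rho)/\rho$ stays uniformly away from zero on the annulus. This cannot be done: the only a priori lower bound on $\lambda_n$ is $\lambda_n\ge -\lambda_1/n$, and $\lambda_1$ is the very quantity you are trying to control, so $\lambda_n$ is effectively unbounded below. No fixed choice of $t,s,\psi$ prevents cancellation, and the ``self-regularization for large $\lambda_1$'' you allude to does not materialize: at a point with $z_i=0$ for $i<n$ and $\lambda_n=-t-s\psi'/\rho$, the left side $|\nabla_g\vp|^2$ is exactly zero while your upper bound $\Delta_g\vp\le Ct+s\|\Delta_g\psi\|_\infty$ is positive, so \eqref{Jac} yields nothing.

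The missing idea is that the singular term is not there to boost the \emph{gradient}; it is there for its \emph{concavity}. In the paper's argument one computes, with $Z=|x-y|^2/2$ and the singular piece $-\alpha^{-1}2^\alpha Z^{-\alpha}$,
\[
\Delta_g\vp=\sum_i\frac{\lambda_i+Z^{-\alpha-1}}{1+\lambda_i^2}\;-\;(\alpha+1)Z^{-\alpha-2}\sum_i\frac{z_i^2}{1+\lambda_i^2},
\]
and the second, negative term is the crucial one. In the ``hard'' regime $|\lambda_n+Z^{-\alpha-1}|\le 4Z^{-\alpha-1}$ (so the gradient lower bound collapses and $h$ is useless), the inequality $|\nabla_g\vp|^2\le h\,\Delta_g\vp$ forces $\Delta_g\vp\ge 0$; but in this regime $|\lambda_n|\le 5Z^{-\alpha-1}$, the positive sum is bounded by $C(n)Z^{-\alpha-1}/(1+\lambda_n^2)$, and the concavity term (using $|z_n|\ge|z|/\sqrt n$) contributes $-c(n)(\alpha+1)Z^{-\alpha-1}/(1+\lambda_n^2)$. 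Choosing $\alpha=\alpha(n)$ large makes $\Delta_g\vp<0$, a contradiction. In the complementary ``easy'' regime one recovers the usual lower bound on $|\nabla_g\vp|^2$ and concludes with $h$ small. Thus the case split is on the \emph{size of the bracket}, not on whether it vanishes, and in the bad case one wins with $\alpha$ rather than $h$.

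A secondary point: the paper drops the convex piece $\tfrac{t}{2}|x-y|^2$ entirely and instead shifts by a constant $S$ to make $\eta$ vanish on $\partial B_{1/2}(y)$. Keeping $+t$ in $\vp$ adds $t\sum_i(1+\lambda_i^2)^{-1}$ to $\Delta_g\vp$, which fights against the concavity term you need in the hard case; it is cleaner to remove it.
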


In order to establish a doubling inequality, we are free to sacrifice all control inside a small ball.  Therefore, we can add a singularity to our cutoff inside this ball.

\begin{proof}

\medskip
\textbf{Step 1: cutoff.} Let $\al,h^{-1}\gg 1$.  We form the \textbf{singular cutoff} on $B_{1}(0)\setminus\{y\}$ of Korevaar exponential type:
\eqal{
\eta=\Big(e^{(S-\vp)/h}-1\Big)_+,
}
where, for $y\in B_{1/2}(0)$, we add an increasing concave term to Guan-Qiu's radial derivative:
\eqal{
\qquad \vp&=(x-y)\cdot Du-u+u(y)-\frac{\alpha^{-1}2^\al}{|x-y|^{2\al}},\\
S&=-1-\|(x-y)\cdot Du-u+u(y)\|_{L^\infty(B_{1/2}(y))}-\al^{-1}2^{3\al}.
}
Then $S-\vp< 0$ on $\pd B_{1/2}(y)$ and $S-\vp>0$ on $B_{1/4}(y)$ for $\al$ large enough.  In general, 
$$
B_{1/4}(y)\setminus\{y\}\subset\subset \text{supp}(\eta)\subset\subset B_{1/2}(y)\subset B_{1}(0).
$$
Note that we extend $\eta=0$ outside the connected component of $\{\eta>0\}$ in $B_1\setminus\{y\}$ containing the hole at $x=y$.

\smallskip
\textbf{Step 2: test function.} We next consider the maximum point $p$ of $\eta a_1$ on $B_{1/2}(y)\setminus B_r(y)$.  If $p$ is in the interior, then suppose that $\lambda_1=\cdots=\lambda_m>\lambda_{m+1}$ at $x=p$.  It follows that $a_m$ in Proposition \ref{prop:JacCrit} is smooth near $x=p$ and attains its maximum at $x=p$.  As in the Jacobi calculation \eqref{reverse_Jac}, we obtain at $p$
\eqal{
\label{max}
|\nabla_g\vp|^2\le h\Delta_g\vp.
}
We suppose $D^2u$ is diagonalized at $p$ with $\lambda_i=u_{ii}$ and $\lambda_1\ge\cdots\ge \lambda_n$.  We also denote $z_i=x_i-y_i$.  Then the increasing singular term increases the left hand side:
\eqal{
\label{lhs}
|\nabla_g\vp|^2=\sum_i z_i^2\frac{\Big(\lambda_i+Z^{-\al-1}\Big)^2}{1+\lambda_i^2},\qquad Z:=|z|^2/2.
}
The right hand side has an extra negative term from the concave singular cutoff:
\eqal{
\label{rhs}
\Delta_g\vp=\sum_i\frac{\lambda_i+Z^{-\al-1}}{1+\lambda_i^2}-(\al+1)Z^{-\al-2}\sum_i\frac{z_i^2}{1+\lambda_i^2}.
}
We emphasize that the correct signs in these equations require the extra term to be singular.  This is usually a fatal problem, but restricting to $|x-y|\ge r$, we encounter no issues.

\medskip
\noindent
\textbf{CASE 1: $|z_n|\ge |z|/\sqrt{n}$:} Using $|\lambda_n|\le |\lambda_i|$ in \eqref{order}, inequality \eqref{max} becomes
\eqal{
\label{max1}
Z\frac{\Big(\lambda_n+Z^{-\al-1}\Big)^2}{1+\lambda_n^2}\le C(n)h\Big(\frac{|\lambda_n|+Z^{-\al-1}}{1+\lambda_n^2}-\frac{c(n)(\al+1)Z^{-\al-1}}{1+\lambda_n^2}\Big).
}
\textbf{Hard subcase $|\lambda_n+Z^{-\al-1}|\le 4Z^{-\al-1}$:} this means $|\lambda_n|\le 5Z^{-\al-1}$.  Using the last negative term, we obtain
\eqal{
\al+1\le C(n).
}
This is a contradiction for fixed $\al=\al(n,\|u\|_{C^{0,1}(B_1(0))}$ large enough.  This case is hard because $h\ll 1$ is unavailable.

\smallskip
\noindent
\textbf{Easy subcase $|\lambda_n+Z^{-\al-1}|>4Z^{-\al-1}$:} this means $|\lambda_n|\ge 3Z^{-\al-1}$.  If $\lambda_n<0$,
\eqal{
C(n)h|\lambda_n|&\ge Z\Big(-\lambda_n-Z^{-\al-1}\Big)^2\\
&\ge cZ\lambda_n^2\\
&\ge cZ|\lambda_n|Z^{-\al-1}.
}
The $\lambda_n\ge 0$ case gives the same result.  In fact, in $B_{1/2}(y)\setminus B_{r}(y)$, we have $Z\le 1/8$, so we obtain
\eqal{
h\ge 1/C(n).
}
This is a contradiction for $h=h(n,\|u\|_{C^{0,1}(B_2)}$ small enough.

\medskip
\textbf{CASE 2: $|z_i|\ge |z|/\sqrt{n}$ for $i<n$. } Since $Z^{-\al-1}>1$ on $B_{1/2}(y)\setminus B_r(y)$, and $\lambda_i\ge|\lambda_n|\ge 0$ by \eqref{order}, the left hand side \eqref{lhs} becomes
\eqal{
|\nabla_g\vp|^2\ge c(n)Z\frac{\lambda_i^2+2\lambda_iZ^{-\al-1}+Z^{-2(\al+1)}}{1+\lambda_i^2}> c(n)Z\ge c(n)r^2.
}
Then \eqref{max} becomes
\eqal{
r^2/2\le C(n)h\Big(1+Z^{-(\al+1)}\Big)\le C(n)hr^{-2(\al+1)}.
}
This is a contradiction for $h(r,\al,n)=h(r,n,\|u\|_{C^{0,1}(B_2)})$ small enough.  

\medskip
\noindent
\textbf{Conclusion of Step 2:} the max must occur on the boundary.  Since $\eta=0$ on $\pd B_{1/2}(y)$,
\eqal{
\sup_{B_{1/2}(y)\setminus B_r(y)}\eta b= \sup_{\pd B_r(y)}\eta b\le C(r,n,\|u\|_{C^{0,1}(B_1)(0)})\sup_{B_r(y)}b.
}

\medskip
\noindent
\textbf{Step 3: doubling inequality.  }  For $r<1/4$, the above conclusion gives
\eqal{
\sup_{B_{1/4}(y)}b&\le \sup_{B_{1/4}\setminus B_r(y)}b+\sup_{B_r(y)}b\\
&\le C\sup_{B_{1/4}(y)\setminus B_r(y)}\eta b+\sup_{B_r(y)}b\\
&\le C\sup_{B_r(y)}b
}
Here, $C=C(r,n,\|u\|_{C^{0,1}(B_1(0))})$.  
\end{proof}

\section{Proof of the Theorems}

Let $u_k\in C^\infty$ solve \eqref{slag} on $B_2(0)$ with $\|u_k\|_{C^{0,1}(B_1(0))}\le A$ but blowup $|D^2u_k(0)|\to\infty$.  We choose a uniformly convergent subsequence in $B_1(0)$ to viscosity solution $u\in C^0(\ov{B_1(0)})$ of \eqref{slag}.  

\smallskip
\noindent
\textbf{Step 1: Partial regularity of the limit.  }There are two cases, and we claim Alexandrov is valid in both:

\smallskip
(i) Suppose $u_k$ are convex solutions or semi-convex with a Jacobi inequality.  It follows that $u$ is also convex, and Alexandrov's theorem shows that $D^2u$ exists a.e. in $B_1(0)$.

\smallskip
(ii) Suppose $\Theta\ge (n-2)\pi/2$.  Then $u_k$ is $n-1$ convex by \eqref{n-1}, so by Lemma \ref{lem:n-1}, $u$ is also $n-1$ convex in the viscosity sense.  Then Proposition \ref{prop:Alex} shows that Alexandrov's theorem is true for $n\ge 3$.  

\smallskip
Using Alexandrov, we choose $y\in B_{1/2}(0)$ such that $|y|\le R(n,K,A)/2$ is sufficiently close to $x=0$ as in Propositions \ref{prop:doubCon}, \ref{prop:doubSemi}, and \ref{prop:doubCrit}.  Letting $Q(x)$ be the Taylor polynomial of $u$ at $x=y$, we have $|u(x)-Q(x)|\le \sigma(|x-y|)$ for some modulus $\sigma(r)=o(r^2)/r^2$ as $r\to 0$.  This implies $Q$ solves \eqref{slag}, using quadratic comparison functions.

\smallskip
\noindent
\textbf{Step 2: Flattening the error.  }As in \cite[pg 17]{SY23a}.  We let error $v_k=u_k-Q$, then rescale
\eqal{
\bar v_k(\bar x)=r^{-2}v_k(y+r\bar x),\qquad \bar x\in B_1(0).
}
Then 
\eqal{
\label{close}
\|\bar v_k\|_{L^\infty(B_1(0))}&\le r^{-2}\|u_k(y+r\bar x)-u(y+r\bar x)\|_{L^\infty(B_1(0))}+\|\frac{u(y+r\bar x)-Q(y+r\bar x)}{r^2}\|_{L^\infty(B_1(0))}\\
&\le r^{-2}o(k)/k+\sigma(r).
}
The last inequality comes from uniform convergence and Alexandrov.  

\smallskip
\noindent
\textbf{Step 3: Savin stability of partial regularity.  }Since $Q$ is a solution of \eqref{slag}, observe that $\bar v_k$ solves the fully nonlinear elliptic PDE on $B_1(0)$
\eqal{
G(D^2\bar v_k)=\sum_{i=1}^n\Big[\arctan\lambda_i(D^2Q+D^2\bar v_k)-\arctan\lambda_i(D^2Q)\Big]=0.
}
We see that $G(0)=0$, and Savin's conditions are satisfied.  We use Proposition \ref{prop:Savin} to find $c_1$.  In \eqref{close}, we can choose $r=r(\sigma)\ll 1$, then all $k\ge k(r(\sigma))\gg 1$, such that $\|\bar v_k\|_{L^\infty(B_1(0))}\le c_1$.  By Savin Proposition \ref{prop:Savin}, we deduce that $\|\bar v_k\|_{C^{2,\alpha}(B_{1/2}(0))}\le 1$.  Equivalently, if we relabel $r/2$ as $r=r(\sigma)$,
\eqal{
\label{stable}
\|u_k\|_{C^{2,\alpha}(B_{r}(y)}\le C(n,Q,\sigma).
}

\smallskip
\noindent
\textbf{Step 4: Doubling to propagate the partial regularity.  }By Propositions \ref{prop:doubCon}, \ref{prop:doubSemi}, or \ref{prop:doubCrit}, we use \eqref{stable} to obtain for smooth solutions $u_k$
\eqal{
\sup_{B_{R}(y)}|D^2u_k|&\le C(r,n,K,A,a,\underline{\sup_{B_r(y)}|D^2u_k|})\\
&\le C(r,n,K,A,a,\underline{C(n,Q,\sigma)}).
}
Since $B_{R/2}(0)\subset B_R(y)$ and $r=r(\sigma)$, we obtain overall
\eqal{
|D^2u_k(0)|\le \sup_{B_{R/2}(y)}|D^2u|\le C(\sigma,n,K,A,Q,a).
}
This contradicts the blowup assumption.  We conclude the proof.

\section*{Acknowledgments}

I thank Ovidiu Savin for pointing out the reference \cite{CW93} and Yu Yuan for comments.



\end{document}